 \tikzset{mynode/.style={draw,circle,inner sep=1pt,outer sep=0pt}}
\newtheorem{teo}{Theorem}[section]
\newtheorem{cor}[teo]{Corollary}
\newtheorem{defi}[teo]{Definition}
\newtheorem{prop}[teo]{Proposition}
\newtheorem{remark}[teo]{Remark}
\newcommand{\A}{\ensuremath{\mathbb{A}}}
\newcommand{\Spec}{\operatorname{Spec}}
\newcommand{\End}{\operatorname{End}}
\newcommand{\Cal}[1]{{\mathcal #1}}
\dedicatory{}
\begin{document}
\title[What is the spectral category?]{What is the spectral category?}

\author[M.~J.~Arroyo Paniagua, A.~Facchini, M.~Gran and G.~Janelidze]{Mar\'{i}a Jos\'{e} Arroyo Paniagua}
\address[Mar\'{i}a Jos\'{e} Arroyo Paniagua]{Departamento de Matem\'{a}ticas, Divisi\'{o}n de Ciencias B\'{a}sicas e Ingenier\'{i}a, Universidad Aut\'{o}noma Metropolitana, Unidad Iztapalapa, Mexico, D. F., M\'{e}xico}
\thanks{The second author was partially supported by Ministero dell'Istruzione, dell'Universit\`a e della Ricerca (Progetto di ricerca di rilevante interesse nazionale ``Categories, Algebras: Ring-Theoretical and Homological Approaches (CARTHA)'') and Dipartimento di Matematica ``Tullio Levi-Civita'' of Universit\`a di Padova (Research program DOR1828909 ``Anelli e categorie di moduli''). The third author was supported by a ``Visiting scientist scholarship - year 2019'' by Universit\`a degli Studi di Padova. The fourth author was partially supported by South African NRF}
\email{mariajose.mja@gmail.com}

\author[ ]{Alberto Facchini}
\address[Alberto Facchini]{Dipartimento di Matematica ``Tullio Levi-Civita'', Universit\`{a} di Padova, Padova, Italy}
\thanks{}
\email{facchini@math.unipd.it}

\author[ ]{Marino Gran}
\address[Marino Gran]{Universit\'{e} catholique de Louvain, D\'{e}partement de Math\'{e}matique, 1348 Louvain-la-Neuve, Belgique}
\thanks{}
\email{marino.gran@uclouvain.be}

\author[ ]{George Janelidze}
\address[George Janelidze]{Department of Mathematics and Applied Mathematics, University of Cape Town, Rondebosch 7700, South Africa}
\email{george.janelidze@uct.ac.za}

\keywords{spectral category, normal category, essential monomorphism}

\subjclass[2010]{}

\begin{abstract}
For a category $\mathcal{C}$ with finite limits and a class $\mathcal{S}$ of monomorphisms in $\mathcal{C}$ that is pullback stable, contains all isomorphisms, is closed under composition, and has the strong left cancellation property, we use pullback stable $\mathcal{S}$-essential monomorphisms in $\mathcal{C}$ to construct a spectral category $\mathrm{Spec}(\mathcal{C},\mathcal{S})$. We show that it has finite limits and that the canonical functor $\mathcal{C}\to \mathrm{Spec}(\mathcal{C},\mathcal{S})$ preserves finite limits. When $\mathcal{C}$ is a normal category, assuming for simplicity that $\mathcal{S}$ is the class of all monomorphisms in $\mathcal{C}$, we show that pullback stable $\mathcal{S}$-essential monomorphisms are the same as what we call subobject-essential monomorphisms.
\end{abstract}

\date{\today}

\maketitle

\section{Introduction}

 The \textit{spectral category} $\mathrm{Spec}(\mathcal{C})$ of a Grothendieck category $\mathcal{C}$ was introduced by Gabriel and Oberst in [11]. According to the Abstract of [11], $\mathrm{Spec}(\mathcal{C})$ is obtained from $\mathcal{C}$ by formally inverting all essential monomorphisms. Although there is no reference to Gabriel and Zisman [12], the definition given in Section 1.2 of [11] is in fact a construction based on the fact that the class of essential monomorphisms in  $\mathcal{C}$ admits the calculus of right fractions. Indeed, it presents the abelian groups $\mathrm{Hom}_\mathrm{Spec({\mathcal{C})}}(A,B)$ (for all $A,B\in \mathrm{Ob}(\mathcal{C})=\mathrm{Ob}(\mathrm{Spec}(\mathcal{C}))$) as directed colimits
\begin{equation*}
\mathrm{Hom}_\mathrm{Spec({\mathcal{C})}}(A,B)=\mathrm{colim}\,\mathrm{Hom}_\mathcal{C}(A',B)
\end{equation*}
taken over all subobjects $A'$ of $A$. It is also easy to see that the spectral category $\mathrm{Spec}(\mathcal{C})$ can equivalently be defined as the quotient category of the category of injective objects in $\mathcal{C}$ modulo the ideal consisting of all morphisms in $\mathcal{C}$ whose kernels are essential monomorphisms. Although this is not mentioned in [11], it is said there that $\mathrm{Spec}(\mathcal{C})$ is a replacement of the \textit{spectrum} of $\mathcal{C}$, which is defined (when $\mathcal{C}$ is the category of modules over a ring) as the collection of isomorphism classes of indecomposable injective objects.

Introducing the spectral category of a Grothendieck category $\mathcal{C}$ can also be motivated by non-functoriality of injective envelopes as follows. For each object $C$ in $\mathcal{C}$, let us fix an injective envelope (=injective hull) $\iota_C:C\to E(C)$ of it. One might expect $E$ to become an endofunctor of $\mathcal{C}$, and $\iota$ to become a natural transformation $1_{\mathcal{C}}\to E$. However, there are strong negative results against these expectations:
\begin{itemize}
	\item According to Proposition 1.12 in [13], $E$ cannot be made a functor even when $\mathcal{C}$ is the category of abelian groups.
	\item Let $R$ be a ring and $\mathcal{C}$ the category of $R$-modules. The ring $R$ can be chosen in such a way that not all $R$-modules are injective, but $E$ can be made an endofunctor of $\mathcal{C}$ (see [13, Exercise 24, p. 48] or [8]), but even in those cases $\iota$ will not become a natural transformation $1_{\mathcal{C}}\to E$. This follows from a very general Theorem 3.2 of [2].   
\end{itemize}
On the other hand, the canonical functor $P:\mathcal{C}\to \mathrm{Spec}(\mathcal{C})$, which the spectral category $\mathrm{Spec}(\mathcal{C})$ comes equipped with, nicely plays the roles of both $1_\mathcal{C}$ and $E$, since each object in $\mathrm{Spec}(\mathcal{C})$ is injective, as shown in [11].

In this paper, however, we are not interested in injective objects, and our main aim is to construct $\mathrm{Spec}(\mathcal{C})$ in full generality, when $\mathcal{C}$ is supposed to be an arbitrary category with finite limits. Apart from the Grothendieck category case above, this was already done in the case of an arbitrary abelian category [12, p. 15], and for some non-additive categories [3].

In fact we begin by taking not just an arbitrary category $\mathcal{C}$ with finite limits, but also any class $\mathcal{S}$ of its monomorphisms that contains all isomorphisms and is pullback stable and closed under composition. We define the spectral category $\mathrm{Spec}(\mathcal{C},\mathcal{S})$ of the pair $(\mathcal{C},\mathcal{S})$ to be the category
\begin{equation*}
\mathcal{C}[(\mathrm{St}(\mathrm{Mono}_E(\mathcal{C},\mathcal{S})))^{-1}]
\end{equation*}
of fractions of $\mathcal{C}$ for the class $\mathrm{St}(\mathrm{Mono}_E(\mathcal{C},\mathcal{S}))$ of pullback stable $\mathcal{S}$-essential monomorphisms of $\mathcal{C}$. When $\mathcal{S}$ is the class of all monomorphisms in $\mathcal{C}$, we write $\mathrm{Spec}(\mathcal{C},\mathcal{S})=\mathrm{Spec}(\mathcal{C})$ and call this category the spectral category of $\mathcal{C}$. 

We make various observations concerning the spans and fractions involved. The most important one is that the class $\mathrm{St}(\mathrm{Mono}_E(\mathcal{C},\mathcal{S}))$ admits the calculus of right fractions, just as the class of essential monomorphism in an abelian category does.

We point out that the spectral category $\mathrm{Spec}(\mathcal{C})$ has finite limits and that the
canonical functor $P:\mathcal{C}\to \mathrm{Spec}(\mathcal{C})$ preserves finite limits. When $\mathcal{C}$ is a normal category [18], assuming for simplicity that $\mathcal{S}$ is the class of all monomorphisms
in $\mathcal{C}$, we show that pullback stable $\mathcal{S}$-essential monomorphisms are the same as
what we call subobject-essential monomorphisms. These are those monomorphisms $m:M\to A$ in $\mathcal{C}$ such that, for any monomorphism $n:N\to A$, one has that $N=0$ whenever $M\times_AN=0$. Finally, when $\mathcal{C}$ is normal, the monoid $\mathrm{End}_{\mathrm{Spec}(\mathcal{C})}(P(A))$ of endomorphisms of an object $P(A)$ in the spectral category is a division monoid whenever $A$ is a \textit{uniform} object (a notion extending the classical one of uniform module in the additive context).

The theory we develop is indeed an extension of what was done in [11] for
the case of Grothendieck categories and in [3] for the category of $G$-groups. Note
that there are several papers involving essential monomorphisms in non-abelian
contexts (see e.g. [4, 21] and the references therein), although it is not their purpose
to introduce spectral categories.\\

\vspace{2mm}

\noindent \textit{Throughout this paper,} $\mathcal{C}$ \textit{denotes a category with finite limits}.

\section{Stabilization of classes of morphisms}

Let $\mathcal{M}$ be a class of morphisms in $\mathcal{C}$. Following \cite{[CJKP]}, define the stabilization $\mathrm{St}(\mathcal{M})$ of $\mathcal{M}$ as the class of morphisms $m\colon M\to A$ such that, for every pullback diagram of the form
\begin{equation*}\xymatrix{U\ar[d]\ar[r]^{u}&X\ar[d]\\M\ar[r]_{m}&A,}
\end{equation*}
$u$ is in $\mathcal{M}$. Let us recall that the symbol ``$\mathrm{St}$'' was used in \cite{[JT]}, while in \cite{[CJKP]} the stabilization of $\mathcal{M}$ was simply denoted by $\mathcal{M}'$. { Similar constructions were also used before, of course. }

\begin{prop}\label{prop-STM}
	The stabilization $\mathrm{St}(\mathcal{M})$ of $\mathcal{M}$ has the following properties:
	\begin{itemize}
		\item [(a)] The class $\mathrm{St}(\mathcal{M})$ is pullback stable.
		\item [(b)] If $\mathcal{M}$ contains all isomorphisms, then so does $\mathrm{St}(\mathcal{M})$.		                                                                                                                                                                                                                                                                                                                                                                                                                                                                                                                                                                                                                                                                          
		\item [(c)] If $\mathcal{M}$ is closed under composition, then so is $\mathrm{St}(\mathcal{M})$.
		\item [(d)] If $\mathcal{M}$ has the right cancellation property of the form
		\begin{equation*} (mm'\in\mathcal{M}\,\,\&\,\,m' \in \mathcal{S})\Rightarrow m\in\mathcal{M}
		\end{equation*}
		for some pullback stable class $\mathcal{S}$ of morphisms in $\mathcal{C}$, 
		then $\mathrm{St}(\mathcal{M})$ has the same property with respect to the same class $\mathcal{S}$.
		\item [(e)] If $\mathcal{M}$ has the weak right cancellation property
		\begin{equation*} (mm'\in\mathcal{M}\,\,\&\,\,m'\in \mathcal{M})\Rightarrow m\in\mathcal{M},
		\end{equation*}
		then $\mathrm{St}(\mathcal{M})$ has the same property.
		\item [(f)] $\mathrm{St}(\mathcal{M})$ has the left cancellation property of the form
		\begin{equation*} (mm'\in\mathrm{St}(\mathcal{M})\,\,\&\,\,m\in\mathrm{Mono}(\mathcal{C}))\Rightarrow m'\in\mathrm{St}(\mathcal{M}),
		\end{equation*}
		where $\mathrm{Mono}(\mathcal{C})$ denotes the class of all monomorphisms in $\mathcal{C}$.
	\end{itemize}
\end{prop}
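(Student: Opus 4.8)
The plan is to treat (a)--(c) as immediate consequences of the pullback-pasting lemma, to put the real content into (d), to derive (e) from (d), and to handle (f) via a separate remark about monomorphisms. Throughout I will use the trivial observation that a morphism is a pullback of itself along the identity, so that $\mathrm{St}(\mathcal{M})\subseteq\mathcal{M}$.

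For (a), given $m\in\mathrm{St}(\mathcal{M})$ and a pullback $m'$ of $m$ along some $f$, every further pullback of $m'$ is by pasting again a pullback of $m$, hence lies in $\mathcal{M}$; thus $m'\in\mathrm{St}(\mathcal{M})$. For (b), every pullback of an isomorphism is an isomorphism, so if $\mathcal{M}$ contains all isomorphisms then so does $\mathrm{St}(\mathcal{M})$. For (c), if $m,m'\in\mathrm{St}(\mathcal{M})$ and one pulls back the composite $mm'$ along some $f$, then pasting realizes this pullback as a composite $\bar m\,\bar m'$, where $\bar m$ is a pullback of $m$ and $\bar m'$ is a pullback of $m'$; hence $\bar m,\bar m'\in\mathcal{M}$, and closure of $\mathcal{M}$ under composition gives $\bar m\,\bar m'\in\mathcal{M}$.

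The main step is (d), and it is the only place where pullback stability of $\mathcal{S}$ is needed. Suppose $mm'\in\mathrm{St}(\mathcal{M})$ and $m'\in\mathcal{S}$, and let $\bar m$ be an arbitrary pullback of $m$ along some $f$. Pulling back $mm'$ along the same $f$ and decomposing by pasting, I would obtain a factorization $\bar m\,\bar m'$ of this pullback in which $\bar m'$ is a pullback of $m'$ along the induced morphism. Then $\bar m\,\bar m'\in\mathcal{M}$ because $mm'\in\mathrm{St}(\mathcal{M})$, while $\bar m'\in\mathcal{S}$ because $\mathcal{S}$ is pullback stable; the right cancellation property of $\mathcal{M}$ now yields $\bar m\in\mathcal{M}$. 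Since $\bar m$ was an arbitrary pullback of $m$, this shows $m\in\mathrm{St}(\mathcal{M})$.

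Part (e) then follows for free: by (a) the class $\mathrm{St}(\mathcal{M})$ is pullback stable, and $\mathrm{St}(\mathcal{M})\subseteq\mathcal{M}$, so $\mathcal{M}$ satisfies the right cancellation property of (d) with respect to the pullback stable class $\mathcal{S}=\mathrm{St}(\mathcal{M})$; applying (d) with this choice of $\mathcal{S}$ is precisely the weak right cancellation property for $\mathrm{St}(\mathcal{M})$. For (f) the argument is different and rests on the observation that, when $m$ is a monomorphism, the pullback of $m'$ along any $f$ is also a pullback of $mm'$ along $mf$. Indeed, since $m$ is mono the square whose horizontal edges are $1$ and $m$ and whose vertical edges are $m'$ and $mm'$ is a pullback, so pasting it with the pullback of $m'$ along $f$ exhibits any pullback $\bar m'$ of $m'$ as a pullback of $mm'$; as $mm'\in\mathrm{St}(\mathcal{M})$ this forces $\bar m'\in\mathcal{M}$, whence $m'\in\mathrm{St}(\mathcal{M})$. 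The main obstacle throughout is merely careful bookkeeping with the pasting lemma, and the one genuinely distinctive point is the monomorphism identity in (f), which is exactly where the hypothesis $m\in\mathrm{Mono}(\mathcal{C})$ is used.
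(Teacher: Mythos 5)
Your proof is correct, and for parts (a), (b), (c), (d), and (f) it follows essentially the paper's own route: (c) and (d) use exactly the paper's horizontally pasted pair of pullback squares over $M'\xrightarrow{m'}M\xrightarrow{m}A$, and (f) rests on the same key fact as the paper's proof, namely that when $m$ is a monomorphism the commutative square with horizontal edges $1_{M'}$ and $m$ and vertical edges $m'$ and $mm'$ is a pullback, so that any pullback of $m'$ along $f\colon L\to M$ is, by pasting, also a pullback of $mm'$ along $mf$. The paper draws this as a $2\times 2$ grid built from an arbitrary $l\colon L\to M$, while you paste a single extra square onto the pullback of $m'$ along $f$, but the content is identical (the paper even isolates this monomorphism fact in its Remark 6.6 and notes it was the point of its proof of (f)). The one genuine difference is (e): the paper proves it directly by re-using the same two-square diagram, whereas you deduce it formally from (d) via the self-application $\mathcal{S}=\mathrm{St}(\mathcal{M})$. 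This is legitimate: $\mathrm{St}(\mathcal{M})$ is pullback stable by (a), and your identity-pullback observation $\mathrm{St}(\mathcal{M})\subseteq\mathcal{M}$ shows that the weak right cancellation property of $\mathcal{M}$ yields the cancellation property of $\mathcal{M}$ relative to the class $\mathrm{St}(\mathcal{M})$, so (d) applies verbatim. Your reduction buys a small economy (no repetition of the diagram argument) and makes visible that (e) is a purely formal consequence of (a), (d), and $\mathrm{St}(\mathcal{M})\subseteq\mathcal{M}$; the paper's direct proof costs one more run of the same diagram but does not depend on (d) having been stated for an arbitrary pullback stable class $\mathcal{S}$, which is precisely the generality your shortcut exploits.
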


\begin{proof} (a) and (b) are obvious.
	
	To prove (c), (d), and (e), use a diagram of the form
	\begin{equation*}\xymatrix{U'\ar[d]\ar[r]&U\ar[d]\ar[r]&X\ar[d]\\M'\ar[r]_{m'}&M\ar[r]_m&A,}
	\end{equation*}
	where the squares are pullbacks and the unlabeled arrows are the suitable pullback projections.
	
	To prove (f), consider the diagram
	\begin{equation*}\xymatrix{M'\times_ML\ar[d]\ar[r]&L\ar[d]_l\ar[r]^{1_L}&L\ar[d]^l\\M'\ar[d]_{1_{M'}}\ar[r]^{m'}&M\ar[d]_{1_{M}}\ar[r]^{1_M}&M\ar[d]^m\\M'\ar[r]_{m'}&M\ar[r]_m&A,}
	\end{equation*}
	where $l\colon L\to M$ is an arbitrary morphism and the unlabeled arrows are the pullback projections. Note that all its squares are pullbacks, except for the right-hand bottom square, although it is also a pullback if $m$ is a monomorphism. Therefore, if $mm'$ is in $\mathrm{St}(\mathcal{M})$ and $m$ is a monomorphism, the pullback projection $M'\times_ML\to L$ is in $\mathcal{M}$. This proves the desired implication. 
\end{proof}
\begin{remark} {\em The properties \ref{prop-STM}(a)-(c) are mentioned in \cite{[CJKP]} and \ref{prop-STM}(d) is `almost' there, with $\mathcal{E}$ instead of $\mathcal{M}$. Property \ref{prop-STM}(e) also holds in the main example there, but for the trivial reason that $(mm'\in\mathrm{St}(\mathcal{E})\,\,\&\,\,m\in\mathrm{Mono}(\mathcal{C}))$ implies that $m$ is an isomorphism.} \end{remark}

\section{Essential and pullback stable essential monomorphisms}

{\em {Throughout this paper}, we will consider a class $\mathcal{S}$ of monomorphisms in $\mathcal{C}$ that is pullback stable, contains all isomorphisms, is closed under composition, and has the strong left cancellation property
\begin{equation*} mm'\in\mathcal{S}\Rightarrow m'\in\mathcal{S}.
\end{equation*}}
According to a well-known definition, a morphism $m\colon M\to A$ from $\mathcal{S}$ is said to be an $\mathcal{S}$-\textit{essential monomorphism}, if a morphism $f\colon A\to B$ from $\mathcal{C}$ is in $\mathcal{S}$ whenever so is $fm$. When $\mathcal{S}$ is the class of all monomorphisms in $\mathcal{C}$, we will say ``essential'' instead of ``$\mathcal{S}$-essential''. The class of all $\mathcal{S}$-essential monomorphisms will be denoted by $\mathrm{Mono}_E(\mathcal{C},\mathcal{S})$. This class has many ``good'' properties well-known in the case of an abelian $\mathcal{C}$ with $\mathcal{S}$ being the class of all monomorphisms in $\mathcal{C}$ (see e.g.~any of the following: Section 5 in Chapter II of \cite{[Ga]}, Section 2 in Chapter III of \cite{[Mi]}, or Section 15.2 of \cite{[S]}), and also known in the general case, as briefly mentioned in Remark 9.23 of \cite{[AHS]}. The known properties we will need are collected in:   
\begin{prop}\label{Prop-Mono-ECS}
	The class $\mathrm{Mono}_E(\mathcal{C},\mathcal{S})$ of $\mathcal{S}$-essential monomorphisms 	\begin{itemize}
		\item [(a)] contains all isomorphisms;	                                                                                                                                                                                                                                                                                                                                                                                                                                                                                                                                                                                                                                                                          
		\item [(b)] is closed under composition;
		\item [(c)]  has the right cancellation property of the form
		\begin{equation*} (mm'\in\mathrm{Mono}_E(\mathcal{C},\mathcal{S})\,\,\&\,\,m\in \mathcal{S})\Rightarrow m\in\mathrm{Mono}_E(\mathcal{C},\mathcal{S});
		\end{equation*}
		\item [(d)] has the weak right cancellation property 
		\begin{equation*} (mm'\in\mathrm{Mono}_E(\mathcal{C},\mathcal{S})\,\,\&\,\,m'\in\mathrm{Mono}_E(\mathcal{C},\mathcal{S}))\Rightarrow m\in\mathrm{Mono}_E(\mathcal{C},\mathcal{S})
		\end{equation*}
		and, in particular, every split monomorphism that belongs to {$\mathrm{Mono}_E(\mathcal{C},\mathcal{S})$} is an isomorphism.\qed
	\end{itemize}
\end{prop}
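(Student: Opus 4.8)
The plan is to derive all four items directly from the definition of $\mathcal{S}$-essentiality together with the standing assumptions on $\mathcal{S}$. In fact only three of those assumptions will be needed here: that $\mathcal{S}$ contains all isomorphisms, is closed under composition, and has the strong left cancellation property $mm'\in\mathcal{S}\Rightarrow m'\in\mathcal{S}$ (pullback stability plays no role in this proposition). No diagrams are required; everything reduces to factoring a test morphism $f$ and invoking the relevant closure property of $\mathcal{S}$.

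For (a), given an isomorphism $m\colon M\to A$, I first note $m\in\mathcal{S}$. If $f\colon A\to B$ satisfies $fm\in\mathcal{S}$, then writing $f=(fm)m^{-1}$ and using that $m^{-1}\in\mathcal{S}$ together with closure under composition yields $f\in\mathcal{S}$; hence $m$ is $\mathcal{S}$-essential. For (b), given $\mathcal{S}$-essential $m\colon M\to A$ and $m'\colon M'\to M$, closure under composition gives $mm'\in\mathcal{S}$, and if $f(mm')\in\mathcal{S}$ I read this as $(fm)m'\in\mathcal{S}$, apply essentiality of $m'$ to get $fm\in\mathcal{S}$, and then essentiality of $m$ to get $f\in\mathcal{S}$.

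Item (c) is where the cancellation hypothesis starts to carry weight. Assuming $mm'\in\mathrm{Mono}_E(\mathcal{C},\mathcal{S})$ and $m\in\mathcal{S}$, I first extract $m'\in\mathcal{S}$: since $mm'\in\mathcal{S}$, the strong left cancellation property delivers it for free. Then for any $f$ with $fm\in\mathcal{S}$, the composite $f(mm')=(fm)m'$ lies in $\mathcal{S}$ by closure under composition, so essentiality of $mm'$ forces $f\in\mathcal{S}$; combined with the hypothesis $m\in\mathcal{S}$ this shows $m\in\mathrm{Mono}_E(\mathcal{C},\mathcal{S})$.

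The point I expect to be the crux is the membership $m\in\mathcal{S}$ in (d), which is no longer available as a hypothesis. Here I exploit essentiality of $m'$ in a slightly unusual direction: the defining implication for $m'\colon M'\to M$ says that $gm'\in\mathcal{S}\Rightarrow g\in\mathcal{S}$ for every $g\colon M\to B$, and applying it to the test morphism $g=m$ itself, together with $mm'\in\mathcal{S}$, yields $m\in\mathcal{S}$. Once this is secured, essentiality of $m$ follows exactly as in (c). Finally, for the ``in particular'' clause, let $m$ be a split monomorphism in $\mathrm{Mono}_E(\mathcal{C},\mathcal{S})$ with retraction $r$, so $rm=1_M$. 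Applying (d) to the factorization $1_M=rm$, and using that $1_M\in\mathrm{Mono}_E(\mathcal{C},\mathcal{S})$ by (a) while $m\in\mathrm{Mono}_E(\mathcal{C},\mathcal{S})$ by hypothesis, I obtain $r\in\mathrm{Mono}_E(\mathcal{C},\mathcal{S})\subseteq\mathrm{Mono}(\mathcal{C})$; thus $r$ is a monomorphism. Being also a split epimorphism (with section $m$), $r$ is an isomorphism, and from $rm=1_M$ one gets $m=r^{-1}$, so $m$ is an isomorphism as claimed.
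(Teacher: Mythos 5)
Your proof is correct. Note that the paper itself gives no argument for this proposition: it is stated with a \qed and attributed to the literature (Gabriel, Mitchell, Schubert, and Remark 9.23 of Ad\'amek--Herrlich--Strecker), so there is no ``paper proof'' to diverge from; your verification supplies exactly the standard details being invoked. Two points in your write-up deserve highlighting. First, in (c) you correctly recover $m'\in\mathcal{S}$ from $mm'\in\mathcal{S}$ via the strong left cancellation property of $\mathcal{S}$; this is precisely the observation the authors make in Remark 3.2, where they explain that the hypothesis $m'\in\mathcal{S}$ appearing in the corresponding statement of \cite{[AHS]} is redundant in their setting. Second, in (d) your extraction of $m\in\mathcal{S}$ by applying the essentiality of $m'$ to the test morphism $g=m$ is the key step, and it is exactly what makes the weak right cancellation property hold without any membership hypothesis on $m$. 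One minor remark: for the ``in particular'' clause you route through (d) applied to $1_M=rm$; one can get $r\in\mathcal{S}\subseteq\mathrm{Mono}(\mathcal{C})$ even more directly from the essentiality of $m$ itself, since $rm=1_M\in\mathcal{S}$, but your version is equally valid.
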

\begin{remark} {\em Note the difference between our Proposition 3.1(c) and Proposition 9.14(3) of \cite{[AHS]}: we have omitted the redundant assumption $m'\in\mathcal{S}$.
} \end{remark}

From Propositions 2.1 and 3.1, we immediately obtain:
\begin{teo}\label{ST-Mono} 
	The class $\mathrm{St}(\mathrm{Mono}_E(\mathcal{C}, \mathcal{S}))$ of pullback stable  $\mathcal{S}$-essential monomorphisms in $\mathcal{C}$ 
	\begin{itemize}
		\item [(a)] is pullback stable;
		\item [(b)] contains all isomorphisms;		                                                                                                                                                                                                                                                                                                                                                                                                                                                                                                                                                                            
		\item [(c)] is closed under composition;
		\item [(d)] has the right cancellation property of the form
		\begin{equation*} (mm'\in \mathrm{St}(\mathrm{Mono}_E(\mathcal{C},\mathcal{S}))\,\,\&\,\,m\in \mathcal{S})\Rightarrow m\in \mathrm{St}(\mathrm{Mono}_E(\mathcal{C},\mathcal{S}));
		\end{equation*}
		\item [(e)] has the weak right cancellation property 
		\begin{equation*} (mm'\in\mathrm{St}(\mathrm{Mono}_E(\mathcal{C},\mathcal{S}))\,\,\&\,\,m'\in\mathrm{St}(\mathrm{Mono}_E(\mathcal{C},\mathcal{S})))\Rightarrow m\in\mathrm{St}(\mathrm{Mono}_E(\mathcal{C},\mathcal{S})),
		\end{equation*}
		and, in particular, every split monomorphism that belongs to $\mathrm{St}(\mathrm{Mono}_E(\mathcal{C},\mathcal{S}))$ is an isomorphism;
		\item [(f)] has the left cancellation property of the form
		\begin{equation*} (mm'\in\mathrm{St}(\mathrm{Mono}_E(\mathcal{C},\mathcal{S}))\,\,\&\,\,m\in \mathrm{Mono}(\mathcal{C}))\Rightarrow m'\in\mathrm{St}(\mathrm{Mono}_E(\mathcal{C},\mathcal{S})).\qed
		\end{equation*}
	\end{itemize}
\end{teo}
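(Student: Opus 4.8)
The plan is to prove the theorem exactly as the sentence preceding it advertises: by instantiating Proposition~\ref{prop-STM} at the particular class $\mathcal{M}=\mathrm{Mono}_E(\mathcal{C},\mathcal{S})$ and feeding in, as hypotheses, the structural facts about $\mathrm{Mono}_E(\mathcal{C},\mathcal{S})$ recorded in Proposition~\ref{Prop-Mono-ECS}. Concretely, I would match the six items one by one. Item (a) is Proposition~\ref{prop-STM}(a), which holds unconditionally, so nothing from Proposition~\ref{Prop-Mono-ECS} is needed. Item (b) combines Proposition~\ref{Prop-Mono-ECS}(a) (that $\mathrm{Mono}_E(\mathcal{C},\mathcal{S})$ contains all isomorphisms) with Proposition~\ref{prop-STM}(b). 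Item (c) combines Proposition~\ref{Prop-Mono-ECS}(b) (closure under composition) with Proposition~\ref{prop-STM}(c). Finally, item (f) is again unconditional, being a direct application of Proposition~\ref{prop-STM}(f) with $\mathrm{Mono}_E(\mathcal{C},\mathcal{S})$ in place of $\mathcal{M}$.

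The only items that require a small amount of bookkeeping are the two cancellation properties. For (d) I would invoke Proposition~\ref{Prop-Mono-ECS}(c), which endows $\mathrm{Mono}_E(\mathcal{C},\mathcal{S})$ with the right cancellation property relative to $\mathcal{S}$, and then apply Proposition~\ref{prop-STM}(d); here I must first note that $\mathcal{S}$ is pullback stable, which is precisely one of our standing hypotheses on $\mathcal{S}$, so the hypothesis of Proposition~\ref{prop-STM}(d) is genuinely met. For (e) I would use Proposition~\ref{Prop-Mono-ECS}(d) (weak right cancellation for $\mathrm{Mono}_E(\mathcal{C},\mathcal{S})$) together with Proposition~\ref{prop-STM}(e).

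The ``in particular'' clause of (e) needs one extra line. I would first record the general fact that $\mathrm{St}(\mathcal{M})\subseteq\mathcal{M}$ for every class $\mathcal{M}$: pulling a morphism $m\colon M\to A$ back along $1_A$ returns $m$ itself, so membership in the stabilization forces membership in $\mathcal{M}$. Consequently a split monomorphism lying in $\mathrm{St}(\mathrm{Mono}_E(\mathcal{C},\mathcal{S}))$ already lies in $\mathrm{Mono}_E(\mathcal{C},\mathcal{S})$, and hence is an isomorphism by the corresponding clause of Proposition~\ref{Prop-Mono-ECS}(d). Alternatively, one can argue internally to $\mathrm{St}(\mathrm{Mono}_E(\mathcal{C},\mathcal{S}))$: if $m$ is split by $r$ with $rm=1$, then $rm$ is an isomorphism and hence lies in $\mathrm{St}(\mathrm{Mono}_E(\mathcal{C},\mathcal{S}))$ by (b), so the weak right cancellation property (e) places $r$ in $\mathrm{St}(\mathrm{Mono}_E(\mathcal{C},\mathcal{S}))$; being both a split epimorphism and, by the inclusion just noted, a monomorphism, $r$ is an isomorphism, whence so is $m$.

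I do not expect any genuine obstacle: the whole substance of the theorem has been pushed into Propositions~\ref{prop-STM} and~\ref{Prop-Mono-ECS}, and what remains is a careful clause-by-clause translation. If anything, the one point deserving attention is simply to make sure that the cancellation properties assumed in the hypotheses of Proposition~\ref{prop-STM}(d),(e) are phrased in exactly the same form as the conclusions of Proposition~\ref{Prop-Mono-ECS}(c),(d), matching the roles of the factors and the side conditions, so that the instantiation is literally valid rather than merely morally correct.
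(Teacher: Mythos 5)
Your proposal is correct and is essentially identical to the paper's own proof: the paper obtains Theorem~\ref{ST-Mono} precisely by instantiating Proposition~\ref{prop-STM} at $\mathcal{M}=\mathrm{Mono}_E(\mathcal{C},\mathcal{S})$ and feeding in Proposition~\ref{Prop-Mono-ECS}, with no further argument given. Your closing caveat about matching the forms of the cancellation properties is well placed: as printed, Proposition~\ref{prop-STM}(d) assumes $m'\in\mathcal{S}$ while Proposition~\ref{Prop-Mono-ECS}(c) and Theorem~\ref{ST-Mono}(d) have $m\in\mathcal{S}$, a typo-level mismatch that the paper leaves unaddressed but which is harmless, since the two-pullback-square diagram proving Proposition~\ref{prop-STM}(d) works verbatim for either form.
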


\section{Spans and fractions}

Let $\mathcal{C}$ be a category with pullbacks. The bicategory $\mathrm{Span}(\mathcal{C})$ of spans in $\mathcal{C}$, originally introduced in \cite{[B1]} (motivated by the study of spans of additive categories in \cite{[Y]}) is constructed as follows, omitting obvious coherent isomorphisms:
\begin{itemize}
	\item The objects (=0-cells) of $\mathrm{Span}(\mathcal{C})$ are the same as the objects of $\mathcal{C}$.
	\item A morphism (1-cell) $A\to B$ in $\mathrm{Span}(\mathcal{C})$ is a diagram in $\mathcal{C}$ of the form
	\begin{equation*}\xymatrix{A&X\ar[l]_x\ar[r]^f&B,} 
	\end{equation*} 
	usually written either as the triple $(f,X,x)$ or as the pair $(f,x)$.
	\item The composite $(g,Y,y)(f,X,x)=(gq,X\times_BY,xp)$ of $(f,X,x)\colon A\to B$ and $(g,Y,y)\colon B\to C$ is defined via the diagram
	\begin{equation*}\xymatrix{&&X\times_BY\ar[dl]_p\ar[dr]^q\\&X\ar[dl]_x\ar[dr]^f&&Y\ar[dl]_y\ar[dr]^g\\A&&B&&C} 
	\end{equation*}
	in which $p\colon X\times_BY\to X$ and $q\colon X\times_BY\to Y$ are the pullback projections. 
	\item A 2-cell from $(f,X,x)\colon A\to B$ to $(f',X',x')\colon A\to B$ is a morphism $s\colon X\to X'$ with $x's=x$ and $f's=f$, and the 2-cells compose as in $\mathcal{C}$.
\end{itemize}

More generally, {given} a pullback stable class $\mathcal{M}$ of morphisms in $\mathcal{C}$ that contains all identity morphisms and is closed under composition\, \textendash\, we can then form the bicategory $\mathrm{Span}_{\mathcal{M}}(\mathcal{C})$ as above but requiring its morphisms $(f,x)$ to have $x$ in $\mathcal{M}$. As it was observed in a discussion with S. Mac Lane \cite{[JM]} (and most probably known before, which is why the content of that discussion was never published), the assignment $(cls(f,x)\colon A\to B)\mapsto(fx^{-1}\colon A\to B)$ (here $cls$ is the abbreviation for ``class'') determines an isomorphism 
\begin{equation*}
	\Pi(\mathrm{Span}_{\mathcal{M}}(\mathcal{C}))\xrightarrow\approx\mathcal{C}[\mathcal{M}^{-1}],
\end{equation*}
in which:
\begin{itemize}
	\item $\Pi(\mathrm{Span}_{\mathcal{M}}(\mathcal{C}))$ is the Poincar\'e category  of $\mathrm{Span}_{\mathcal{M}}(\mathcal{C})$ { (in the sense of \cite{[B1]})}, that is, it has the same objects as $\mathrm{Span}_{\mathcal{M}}(\mathcal{C})$, and its $\hom$ sets are the sets of connected components of $\hom$ categories of $\mathrm{Span}_{\mathcal{M}}(\mathcal{C})$.
	\item $\mathcal{C}[\mathcal{M}^{-1}]$ is the category of fractions \cite{[GZ]} of $\mathcal{C}$ for $\mathcal{M}$.
\end{itemize}
We are assuming that the reader is familiar with the content of \cite{[B1]}. Repeating here the necessary details from that influential paper would take too much space.

Under the isomorphism above, the functor $\mathcal{C}\to \Pi(\mathrm{Span}_{\mathcal{M}}(\mathcal{C}))$, corresponding to the canonical functor
\begin{equation*}
P_{\mathcal{M}}\colon \mathcal{C}\to \mathcal{C}[\mathcal{M}^{-1}],
\end{equation*} 
is defined by $(f\colon A\to B)\mapsto(cls(f,1_A)\colon A\to B)$.

\bigskip

Recall that a class $\mathcal{M}$ is {\em focal} \cite{[B2]} if it satisfies the following four conditions:
\begin{itemize} \item [$(F_0)$] 
For each object $X\in\Cal C$ there exists an $s\in \mathcal{M}$ with codomain $X$.
 \item [$(F_1)$] For all $\xymatrix{ {} \ar[r]^{s_1}
& {} \ar[r]^{s_0} & }$
with $s_i\in \mathcal{M}$, there exists a morphism $f$ in $\Cal C$ such that the
composite $s_0s_1f$ is defined and is in $\mathcal{M}$.
 \item [$(F_2)$]  Each diagram  
$$\xymatrix{
& \ar[d]^{s}  \\
\ar[r]_f & }$$
with $s\in \mathcal{M}$ can be completed in a commutative
square $$\xymatrix{{} \ar[d]_{s'} \ar[r]^{f'}
& \ar[d]^{s}  \\
\ar[r]_f & }$$ where $s'\in \mathcal{M}$.
 \item [$(F_3)$] If a pair $(f,g)$ of parallel morphisms is coequalized by some $s\in \mathcal{M}$, it is also
equalized by some $s'\in \mathcal{M}$.
\end{itemize}

\begin{prop}\label{focal}
	{ If $\mathcal{M}$ is a pullback stable class of morphisms in $\Cal C$ that contains all identity morphisms and is closed under composition, with $\mathcal{M}\subseteq\mathrm{Mono}(\mathcal{C})$,} then $\mathcal{M}$ is focal and, moreover, $\mathcal{M}$ admits the calculus of right fractions in the sense of \cite{[GZ]}.
\end{prop}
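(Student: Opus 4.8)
The plan is to verify the four focal conditions $(F_0)$--$(F_3)$ directly from the four hypotheses on $\mathcal{M}$, and then to observe that these same verifications furnish the Gabriel--Zisman axioms for a calculus of right fractions. Each focal condition turns out to use exactly one of the assumptions: $(F_0)$ and $(F_1)$ follow from the presence of identities and closure under composition, $(F_2)$ from the existence of pullbacks together with pullback stability of $\mathcal{M}$, and $(F_3)$ from the hypothesis $\mathcal{M}\subseteq\mathrm{Mono}(\mathcal{C})$---which is the only place this last assumption is needed.

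For $(F_0)$, given an object $X$ the identity $1_X$ lies in $\mathcal{M}$ and has codomain $X$, so a member of $\mathcal{M}$ with codomain $X$ always exists. For $(F_1)$, given composable $s_1,s_0\in\mathcal{M}$ I would simply take $f$ to be the identity on the domain of $s_1$; then $s_0 s_1 f = s_0 s_1$ lies in $\mathcal{M}$ by closure under composition. Thus the first two conditions are immediate and carry essentially no content beyond the assumptions already in force.

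The substance is in $(F_2)$ and $(F_3)$. For $(F_2)$, starting from a cospan $N\xrightarrow{f}A\xleftarrow{s}M$ with $s\in\mathcal{M}$, I would complete it by the pullback $M\times_A N$, whose two projections form a commutative square; the projection onto $N$ is a pullback of $s$ along $f$, hence lies in $\mathcal{M}$ by pullback stability, giving the required $s'\in\mathcal{M}$. For $(F_3)$, suppose a parallel pair $f,g$ is coequalized by some $s\in\mathcal{M}$, i.e.\ $sf=sg$. Since $\mathcal{M}\subseteq\mathrm{Mono}(\mathcal{C})$, the morphism $s$ is a monomorphism, so $sf=sg$ forces $f=g$; then $f$ and $g$ are equalized by the identity, which belongs to $\mathcal{M}$. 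This establishes that $\mathcal{M}$ is focal.

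For the calculus of right fractions, I would note that the Gabriel--Zisman axioms are a repackaging of what has just been proved: closure under composition together with the presence of identities gives multiplicativity, condition $(F_2)$ (the pullback completion of a cospan, with the leg over $s$ landing in $\mathcal{M}$) is exactly the right Ore condition, and condition $(F_3)$ is exactly the cancellation axiom matching a post-composition equality with a pre-composition one. The only point requiring care is the matching of variances between the span description $(f,X,x)$ used in the paper, where $x\in\mathcal{M}$ plays the role of the denominator $x^{-1}$, and the Gabriel--Zisman formulation; once the directions are aligned, the right Ore condition is supplied by the pullback square of $(F_2)$ and the cancellation axiom by the monomorphism argument of $(F_3)$. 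I expect this bookkeeping---confirming that the focal conditions translate without loss into the right-fractions axioms---to be the main, though modest, obstacle, with $(F_3)$, and hence the hypothesis $\mathcal{M}\subseteq\mathrm{Mono}(\mathcal{C})$, being the genuinely essential ingredient.
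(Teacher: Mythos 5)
Your proof is correct and follows essentially the same route as the paper: the paper dispenses with $(F_0)$--$(F_2)$ and the first three Gabriel--Zisman axioms by appealing to the preceding discussion of pullback stable classes (Section 4 and Remark 4.2), and then, exactly as you do, observes that the only remaining condition --- $(F_3)$, i.e.\ the dual of condition 2.2(d) of \cite{[GZ]} --- holds trivially because $\mathcal{M}\subseteq\mathrm{Mono}(\mathcal{C})$. Your explicit verifications (identities for $(F_0)$ and $(F_1)$, the pullback square for $(F_2)$, monomorphism cancellation for $(F_3)$) are precisely the arguments the paper leaves implicit, so the two proofs coincide in substance.
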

\begin{proof}
	All we need to check is that $\mathcal{M}$ satisfies the condition dual to condition 2.2(d) in Chapter I of \cite{[GZ]}, i.e., that whenever two parallel morphisms $f$ and $g$ admit a morphism $m\in\mathcal{M}$ with $mf=mg$, they also admit a morphism $n\in\mathcal{M}$ with $fn=gn$. This condition holds trivially because $\mathcal{M}\subseteq\mathrm{Mono}(\mathcal{C})$.
\end{proof}

\begin{remark} {\em Note the following levels of generality (in fact there are many more of them, including those suggested by distinguishing sets of morphisms from proper classes of morphisms), where we omitted all required conditions on $\mathcal{M}$ in the first five items:
		\begin{itemize}
			\item [(a)] For an arbitrary class $\mathcal{M}$ of morphisms of $\Cal C$, we can still form the category $\mathcal{C}[\mathcal{M}^{-1}]$ of fractions of $\mathcal{C}$ for $\mathcal{M}$.
			\item [(b)] As shown in \cite{[B2]}, the morphisms of $\mathcal{C}[\mathcal{M}^{-1}]$ can be presented in the form $P_{\mathcal{M}}(f)P_{\mathcal{M}}(x)^{-1}$ with $x\in\mathcal{M}$ if and only if $\mathcal{M}$ satisfies conditions $(F_0)$, $(F_1)$, and $(F_2)$.
			\item [(c)] In particular, this is the case when $\mathcal{M}$ satisfies the conditions dual to conditions 2.2(a), 2.2(b), and 2.2(c) in Chapter I of \cite{[GZ]}.
			\item [(d)] If the equivalent conditions in (b) hold, then the following conditions are equivalent: $\mathrm{(d_1)}$ $\mathcal{M}$ is focal; $\mathrm{(d_2)}$ $\mathcal{M}$ satisfies condition $(F_3)$, which the same as condition 2.2(d) in Chapter I of \cite{[GZ]}; $\mathrm{(d_3)}$ not only can the morphisms of $\mathcal{C}[\mathcal{M}^{-1}]$ be presented as in (b), but also $P_{\mathcal{M}}(f)P_{\mathcal{M}}(x)^{-1}=P_{\mathcal{M}}(f')P_{\mathcal{M}}(x')^{-1}$ if and only there exists a commutative diagram in $\mathcal{C}$ of the form
			\begin{equation*}\xymatrix{&&X\ar[dl]_x\ar[dr]^f\\&A&Y\ar[u]^u\ar[d]_v&B\\&&X'\ar[ul]^{x'}\ar[ur]_{f'}} 
			\end{equation*}
			with $xu\in\mathcal{M}$.
			\item [(e)] In particular, the equivalent conditions $\mathrm{(d_1)}$-$\mathrm{(d_3)}$ hold when the class $\mathcal{M}$ admits the calculus of right fractions in the sense of \cite{[GZ]}.
			\item [(f)] If $\mathcal{M}$ contains all identity morphisms, is closed under composition, and is pullback stable, then not only are we in the situation (c), but we also have the isomorphism between $\mathcal{C}[\mathcal{M}^{-1}]$ and $\Pi(\mathrm{Span}_{\mathcal{M}}(\mathcal{C}))$ mentioned above.
			\item [(g)] The situation of Proposition \ref{focal}. Note, in particular, that in this case the morphisms $u$ and $v$ in the diamond diagram of (d) belong to $\mathcal{M}$. This follows from Proposition 2.1(f) and the fact that $\mathrm{St}(\mathcal{M})=\mathcal{M}$ here. 
		\end{itemize}
The levels of generality listed above are related as follows:
\begin{equation*}\xymatrix{(a)&(b)\ar@{=>}[l]&(c)\ar@{=>}[l]&(f)\ar@{=>}[l]\\&(d)\ar@{=>}[u]&&(d)\&(f)\ar@{=>}[ll]\ar@{=>}[u]\\&&(e)\ar@{=>}[lu]\ar@{=>}[uu]&(e)\&(f)\ar@{=>}[l]\ar@{=>}[u]\\&&&(g)\ar@{=>}[u]} 
\end{equation*}}
\end{remark}

\begin{remark}\label{GabrielZ}
	{\em We recall from \cite{[GZ]} that already in the situation (d), the equivalent conditions mentioned there imply that the $\hom$ sets of $\mathcal{C}[\mathcal{M}^{-1}]$ can be constructed as filtered colimits
	\begin{equation*}{\hom}_{\mathcal{C}[\mathcal{M}^{-1}]}(A,B)=\mathrm{colim}({\hom}(M,B)),
	\end{equation*}
	where the colimit is taken over all $m\colon M\to A$ in $\mathcal{M}$ (see Page 13 in \cite{[GZ]}, where the dual construction is described esplicitly).} 
\end{remark}

\section{The spectral category}

Let $\mathcal{S}$ be a class of monomorphisms in $\mathcal{C}$ satisfying the conditions required at the beginning of Section 3. Then, as follows from (a)-(c) and (f) of Proposition 3.3, the class $\mathcal{M}=\mathrm{St}(\mathrm{Mono}_E(\mathcal{C},\mathcal{S}))$ satisfies the conditions required in Proposition 4.1. We are ready to give the following
\begin{defi}
	\emph{The {\em spectral category} $\mathrm{Spec}(\mathcal{C},\mathcal{S})$ of $(\mathcal{C},\mathcal{S})$ is the  category 
	\begin{equation*}
	\mathcal{C}[(\mathrm{St}(\mathrm{Mono}_E(\mathcal{C},\mathcal{S})))^{-1}]
	\end{equation*}
	of fractions of $\mathcal{C}$ for the class $\mathrm{St}(\mathrm{Mono}_E(\mathcal{C},\mathcal{S}))$ of pullback stable $S$-essential mono-morphisms of $\mathcal{C}$. When $\mathcal{S}$ is the class of all monomorphisms in $\mathcal{C}$, we shall simply write $\mathrm{Mono}_E(\mathcal{C},\mathcal{S})=\mathrm{Mono}_E(\mathcal{C})$ and $\mathrm{Spec}(\mathcal{C},\mathcal{S})=\mathrm{Spec}(\mathcal{C})$, and say that $\mathrm{Spec}(\mathcal{C})$ is the spectral category of $\mathcal{C}$.}
\end{defi}

Thanks to the results of \cite{[GZ]}, specifically Proposition 3.1 and Corollary 3.2 of Chapter I there, our Proposition \ref{focal} implies:
\begin{teo}
	The spectral category $\mathrm{Spec}(\mathcal{C},\mathcal{S})$ has finite limits. Moreover, the canonical functor
	\begin{equation*}\label{canonical-functor}
	P_{\mathcal{C},\mathcal{S}}=P_{\mathrm{St}(\mathrm{Mono}_E(\mathcal{C},\mathcal{S}))}:\mathcal{C}\to \mathrm{Spec}(\mathcal{C},\mathcal{S}),
	\end{equation*} 
	defined by $(f:A\to B)\mapsto(cls(f,1_A):A\to B)$, preserves finite limits.\qed
\end{teo}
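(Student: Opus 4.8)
The plan is to deduce both assertions from the Gabriel--Zisman theory of right fractions, using the filtered-colimit description of the hom-sets recalled in Remark \ref{GabrielZ}. By Proposition \ref{focal} the class $\mathcal{M}=\mathrm{St}(\mathrm{Mono}_E(\mathcal{C},\mathcal{S}))$ admits the calculus of right fractions, so that, writing $P=P_{\mathcal{C},\mathcal{S}}$, for every pair of objects $A,B$ one has
\begin{equation*}
\hom_{\mathrm{Spec}(\mathcal{C},\mathcal{S})}(A,B)=\colim_{(m\colon M\to A)\in\mathcal{M}}\hom_{\mathcal{C}}(M,B),
\end{equation*}
the colimit being taken over a \emph{filtered} category. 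Since a category with a terminal object and all pullbacks has all finite limits, it suffices to treat these two cases.

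First I would treat the terminal object. If $1$ is terminal in $\mathcal{C}$, then $\hom_{\mathcal{C}}(M,1)$ is a singleton for every $M$, so the displayed colimit over a nonempty (hence connected) filtered index category is again a singleton; thus $\hom_{\mathrm{Spec}(\mathcal{C},\mathcal{S})}(A,P(1))$ is a singleton for every $A$, i.e.\ $P(1)$ is terminal in $\mathrm{Spec}(\mathcal{C},\mathcal{S})$ and is preserved by $P$.

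Next I would treat pullbacks. Given a pullback $A\times_CB$ in $\mathcal{C}$, I would fix an arbitrary object $Z$ and compute
\begin{equation*}
\hom_{\mathrm{Spec}(\mathcal{C},\mathcal{S})}(Z,P(A\times_CB))=\colim_{m\colon M\to Z}\hom_{\mathcal{C}}(M,A\times_CB).
\end{equation*}
Because the representable functor $\hom_{\mathcal{C}}(M,-)$ preserves the pullback, each term is $\hom_{\mathcal{C}}(M,A)\times_{\hom_{\mathcal{C}}(M,C)}\hom_{\mathcal{C}}(M,B)$, and, filtered colimits commuting with finite limits in $\mathbf{Set}$, the whole colimit equals $\hom_{\mathrm{Spec}(\mathcal{C},\mathcal{S})}(Z,P(A))\times_{\hom_{\mathrm{Spec}(\mathcal{C},\mathcal{S})}(Z,P(C))}\hom_{\mathrm{Spec}(\mathcal{C},\mathcal{S})}(Z,P(B))$. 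A Yoneda argument then shows that $P(A\times_CB)$, equipped with the images under $P$ of the two projections, is a pullback of $P(A)\to P(C)\leftarrow P(B)$; this simultaneously gives existence of the pullback in $\mathrm{Spec}(\mathcal{C},\mathcal{S})$ and its preservation by $P$. This is exactly what Proposition 3.1 and Corollary 3.2 of Chapter I of \cite{[GZ]} record, applied to $\mathcal{M}$.

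The main obstacle --- really the only non-formal point --- is the filteredness of the index category of $\mathcal{M}$-morphisms into a fixed object, which is what licenses the interchange of the colimit with finite limits. This is not automatic from $\mathcal{M}$ being merely pullback stable and closed under composition; it is precisely the calculus of right fractions, and its verification for the present $\mathcal{M}$ is supplied by Proposition \ref{focal}, the hypothesis $\mathcal{M}\subseteq\mathrm{Mono}(\mathcal{C})$ being what renders the equalizing/coequalizing condition trivial. A secondary point worth care is that the Yoneda argument must exhibit the limiting cone as the image under $P$ of the limiting cone already present in $\mathcal{C}$, so that one and the same computation yields both the existence of finite limits in $\mathrm{Spec}(\mathcal{C},\mathcal{S})$ and their preservation by the canonical functor $P$.
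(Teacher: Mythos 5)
Your proposal takes the same route as the paper: the paper's entire proof consists of observing (at the start of Section 5) that, by Theorem \ref{ST-Mono}, the class $\mathcal{M}=\mathrm{St}(\mathrm{Mono}_E(\mathcal{C},\mathcal{S}))$ satisfies the hypotheses of Proposition \ref{focal}, hence admits the calculus of right fractions, and then citing Proposition 3.1 and Corollary 3.2 of Chapter I of \cite{[GZ]}. What you add is an unpacking of the Gabriel--Zisman argument itself --- terminal object plus pullbacks, via the filtered-colimit formula of Remark \ref{GabrielZ} and the commutation of filtered colimits with finite limits in $\mathbf{Set}$ --- which is indeed how those cited results are proved, and your identification of filteredness as the only non-formal input is accurate.

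One step of your unpacking needs repair if it is to stand on its own rather than as a gloss on the citation. Writing $P=P_{\mathcal{C},\mathcal{S}}$, your Yoneda argument shows that $P(A\times_CB)$ is a pullback of the cospan $P(A)\xrightarrow{P(f)}P(C)\xleftarrow{P(g)}P(B)$, i.e.\ it handles cospans lying in the image of $P$. That gives preservation, but the existence of \emph{all} pullbacks in $\mathrm{Spec}(\mathcal{C},\mathcal{S})$ requires treating cospans whose legs are arbitrary fractions, and $P$ is not full. The missing (easy) step is this: since $\mathcal{M}$ admits a calculus of right fractions, every morphism $\phi\colon P(A)\to P(C)$ in $\mathrm{Spec}(\mathcal{C},\mathcal{S})$ can be written as $\phi=P(f)P(m)^{-1}$ with $m\colon A'\to A$ in $\mathcal{M}$ and $f\colon A'\to C$ in $\mathcal{C}$, so that $\phi P(m)=P(f)$ with $P(m)$ invertible; doing the same for the other leg $\psi=P(g)P(n)^{-1}$, a pullback $(Q,p,q)$ of $(P(f),P(g))$ --- which your argument provides --- yields the pullback $(Q,P(m)p,P(n)q)$ of $(\phi,\psi)$, because composing the corners of a cospan with isomorphisms does not affect the existence of its pullback. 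With that two-line remark inserted, your argument is complete and is precisely the content of the results of \cite{[GZ]} that the paper invokes.
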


\section{Subobject-essential monomorphisms}

Assuming $\mathcal{C}$ to be pointed, we define:

\begin{defi}\emph{
	A monomorphism $m:M\to A$ in $\mathcal{C}$ is said to be {\em subobject-essential} if, for a monomorphism $n:N\to A$, one has $M\times_AN=0\Rightarrow N=0$. The class of all subobject-essential monomorphisms in $\mathcal{C}$ will be denoted by $\mathrm{Mono}_{SE}(\mathcal{C})$.}
\end{defi}
\noindent 
Recall that a regular epimorphism in a category $\mathcal C$ is a morphism that is the coequalizer of two morphisms in $\mathcal C$.

A finitely complete category $\mathcal C$ is \emph{regular} if any morphism $f \colon A \rightarrow B$ can be factorized as the composite morphism of a regular epimorphism $p \colon A \rightarrow I$ and a monomorphism $m \colon I \rightarrow B$
$$\xymatrix{A \ar[rr]^{f} \ar@{>>}[dr]_p  & &  B \\
&{I\,\,  } \ar@{>->}[ur]_m & 
}$$
and these factorizations are pullback-stable.
Following \cite{[Ja]}, we will call $\mathcal{C}$ \textit{normal} if it is pointed, regular, and any regular epimorphism is a normal epimorphism (i.e., a cokernel of some arrow in $\mathcal C$). In such a category, any regular epimorphism is then the cokernel of its kernel and, as a consequence,
 a morphism in $\mathcal{C}$ is a monomorphism if and only if its kernel is zero. 
 \begin{remark}
 \emph{For a pointed variety $\mathcal V$ of universal algebras, being a normal category is the same as being a $0$-regular variety in the sense of  \cite{Fichtner} (see \cite{JMU} for further explanations and historical remarks about the relationship between the properties of $0$-regularity and normality). The algebraic theory of a pointed $0$-regular variety $\mathcal V$ is characterized by the existence of a unique constant $0$ and binary terms $d_1$, ..., $d_n$ such that the identities $d_i(x,x) = 0$ (for $i \in \{ 1, \cdots, n \}$) and the implication $(d_1(x,y)=0 \, \&\, \dots \,\&\, d_n(x,y)=0) \, \Rightarrow x=y$ hold. Intuitively, these operations $d_i(x,y)$ can then be thought of as a kind of ``generalized subtraction''. This implies that the varieties of groups, loops, rings, associative algebras, Lie algebras, crossed modules and $G$-groups (for a group $G$) are all normal. There are also plenty of examples of normal categories that are not varieties, such as the categories of topological groups, cocommutative $K$-Hopf algebras over a field $K$, and $C^*$-algebras, for instance. In general, any semi-abelian category \cite{JMT} is in particular a normal category.}
 \end{remark}

 For an object $A$ in $\mathcal{C}$, the smallest and the largest congruence (=effective equivalence relation) on $A$ will be denoted by $\Delta_A$ and $\nabla_A$, respectively. Note that equalities like $E=\Delta_A$ should usually be understood as equalities of subobjects (of $A\times A$ in this case).

When $\mathcal{C}$ is normal, it is natural to ask how different subobject-essential monomorphisms are from essential ones (recall that  ``essential'' means $\mathcal{S}$-essential for $\mathcal{S}=\mathrm{Mono}(\mathcal{C})$). Most of this section is devoted to studying various ways to compare them. 

Let us begin with the following proposition, well known in the case of an abelian category $\mathcal{C}$: 

\begin{prop}\label{char-essential} 
	If $\mathcal{C}$ is normal, then the following conditions on a monomorphism $m:M\to A$ in $\mathcal{C}$ are equivalent:
	\begin{itemize}
		\item [(a)] $m$ is an essential monomorphism, that is, a morphism $f:A\to B$ is a monomorphism whenever so is $fm$.                                                                                                                                                                                                                                                                                                                                                                                                                                                                                                                                                                                                                                                                                      
		\item [(b)] For any congruence $E$ on $A$, one has $(M\times M)\times_{A\times A}E=\Delta_M\Rightarrow E=\Delta_A$.
		\item [(c)] For any normal monomorphism $n:N\to A$, one has $M\times_AN=0\Rightarrow N=0$.
		\item [(d)] For any morphism $f:A\to B$, one has $ \mathsf{Ker}(fm)=0\Rightarrow \mathsf{Ker}(f)=0$.\qed
	\end{itemize}
\end{prop}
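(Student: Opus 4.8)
The plan is to route everything through the normality equivalence ``a morphism is a monomorphism if and only if its kernel is zero'', recorded before the statement, and then to move between the three ``zero-intersection'' conditions (b), (c), (d) using the standard correspondences between a morphism, its kernel, and its kernel pair. The first and easiest step is (a)$\Leftrightarrow$(d), which is essentially a tautology: applying the normality equivalence to $fm$ and to $f$ turns ``$fm$ mono $\Rightarrow$ $f$ mono'' verbatim into ``$\mathsf{Ker}(fm)=0\Rightarrow\mathsf{Ker}(f)=0$''.

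Next I would prove (c)$\Leftrightarrow$(d). The key observation is the pullback identity $\mathsf{Ker}(fm)\cong M\times_A\mathsf{Ker}(f)$, which is just the pasting lemma for pullbacks: since $\mathcal{C}$ is pointed, $\mathsf{Ker}(-)$ is the pullback along $0\to B$, and pulling $f$ back first along $m$ and then along $0\to B$ gives $\mathsf{Ker}(fm)$, whereas doing it in the other order gives $M\times_A\mathsf{Ker}(f)$. Writing $N=\mathsf{Ker}(f)$, condition (d) for a fixed $f$ reads ``$M\times_A N=0\Rightarrow N=0$'', which is (c) for $n=\ker(f)$. Since in a normal category the normal monomorphisms into $A$ are exactly the kernels of morphisms out of $A$ (every $\mathsf{Ker}(f)$ is a normal mono, and every normal mono is by definition such a kernel), quantifying over all $f$ in (d) and over all normal monomorphisms $n$ in (c) produces the same family of implications, so (c)$\Leftrightarrow$(d).

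Finally comes (b)$\Leftrightarrow$(c), which I expect to be the main obstacle. Here I would invoke the bijection, valid in a normal category, between congruences on $A$ and normal subobjects of $A$: a congruence $E$ is the kernel pair of a regular epimorphism $q\colon A\to Q$, and because every regular epimorphism is the cokernel of its kernel, the assignment $E\mapsto N:=\mathsf{Ker}(q)$ is inverse to $N\mapsto$ (kernel pair of $\operatorname{coker}(N\to A)$). I then translate the two equalities in (b) across this bijection. The restriction $(M\times M)\times_{A\times A}E$ is precisely the kernel pair of the composite $qm\colon M\to Q$, so $(M\times M)\times_{A\times A}E=\Delta_M$ says exactly that $qm$ is a monomorphism, i.e. $\mathsf{Ker}(qm)=0$, which by the pullback identity above equals $M\times_A N$. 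On the other side, $E=\Delta_A$ says that $q$ is a monomorphism, hence, being a regular epimorphism, an isomorphism, which is equivalent to $N=\mathsf{Ker}(q)=0$. Thus (b) for the congruence $E$ coincides with (c) for the normal monomorphism $n=\ker(q)$, and conversely.

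The points requiring genuine care are all confined to this last step: verifying that $\operatorname{coker}$ and $\mathsf{Ker}$ are mutually inverse on normal subobjects (using that $q=\operatorname{coker}(\ker q)$ for the forward direction and the universal properties of kernel and cokernel together with $n=\ker g$ for the reverse), and identifying $(M\times M)\times_{A\times A}E$ with the kernel pair of $qm$ as a subobject of $M\times M$. Everything else reduces to the pullback pasting lemma and to the normality equivalence between monomorphisms and morphisms with zero kernel.
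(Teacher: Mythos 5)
Your proof is correct, and it rests on the same three ingredients as the paper's: the normality equivalence (monomorphism $\Leftrightarrow$ zero kernel), the pasting identity $\mathsf{Ker}(fm)=M\times_A\mathsf{Ker}(f)$, and the passage between congruences on $A$ and normal subobjects of $A$. What differs is the decomposition. The paper proves the cycle (a)$\Rightarrow$(b)$\Rightarrow$(c)$\Rightarrow$(d)$\Rightarrow$(a): essentiality is invoked exactly once, in (a)$\Rightarrow$(b), where the hypothesis $(M\times M)\times_{A\times A}E=\Delta_M$ is pasted against the kernel-pair square of $f$ to conclude that $fm$ is a monomorphism, hence $f$ is one, hence $E=\Delta_A$; the correspondence between congruences and normal monomorphisms is then needed only in one direction (in (b)$\Rightarrow$(c)), and the steps (c)$\Rightarrow$(d) and (d)$\Rightarrow$(a) are exactly your observations. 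You instead prove three two-way dictionaries, (a)$\Leftrightarrow$(d), (d)$\Leftrightarrow$(c), (c)$\Leftrightarrow$(b), which has the structural merit of showing that (b), (c), (d) are reformulations of a single ``zero-intersection'' condition with no reference to essentiality at all, essentiality entering only through the tautological (a)$\Leftrightarrow$(d). The price is that your (b)$\Leftrightarrow$(c) must establish the full bijection $E\mapsto\mathsf{Ker}(q)$, $N\mapsto$ (kernel pair of $\mathrm{coker}(n)$) --- which requires the normal-epimorphism property $q=\mathrm{coker}(\ker q)$ in both directions --- together with the identification of $(M\times M)\times_{A\times A}E$ with the kernel pair of $qm$; this last identification is the very pasting the paper performs inside (a)$\Rightarrow$(b), just relocated. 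Both arguments are sound: the paper's cycle is a little shorter since every step is one-directional, while your version makes the logical role of each hypothesis (normality versus essentiality) more transparent.
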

\begin{proof}
(a)${}\Rightarrow {}$(b). Let $(E, e_1, e_2)$ be a congruence on $A$, and $f \colon A \rightarrow C$ a morphism such that $E$ is the kernel pair of $f$:
\begin{equation}\label{congruence}
\xymatrix{E \ar[r]^{e_2} \ar[d]_{e_1} & A \ar[d]^f \\
A \ar[r]_f & C.
}
\end{equation}
Consider the commutative diagram 
$$
\xymatrix{M \ar[r] \ar[d]_{(1_M, 1_M)} & E \ar[r] \ar[d]_{(e_1, e_2)} & C \ar[d]^{(1_C, 1_C)} \\
M \times M \ar[r]_{m \times m} & A \times A \ar[r]_{f \times f} & C \times C}
$$
where the right-hand square is a pullback by definition of kernel pair, and the left-hand square is a pullback by the assumption 
$(M\times M)\times_{A\times A}E=\Delta_M$. The fact that the rectangle is a pullback means that $fm$ is a monomorphism. Since $m$ is an essential monomorphism, it follows that $f$ is a monomorphism and $E = \Delta_A$.

(b)${} \Rightarrow {}$(c) This follows from the fact that a congruence $(E, e_1, e_2)$ as in \eqref{congruence} is the discrete equivalence relation $\Delta_A$ if and only if the normal monomorphism $\ker(f) \colon N \rightarrow A$ corresponding to $E$ is $0 \rightarrow A$.

(c)${} \Rightarrow {}$(d) It suffices to apply the assumption to the pullback
$$
\xymatrix{ {0 = \mathsf{Ker}(fm)} \ar[r] \ar[d] & \mathsf{Ker}(f) \ar[d] \\
M \ar[r]_m & A.
}
$$

(d)${} \Rightarrow {}$(a) This is immediate since, in a normal category, monomorphisms are characterized by the fact that their kernel is $0$.
\end{proof}

From Proposition \ref{char-essential}, we immediately obtain:

\begin{cor}\label{coro-sub-essent}
	Let $\mathcal{C}$ be a normal category. Then: 
	\begin{itemize}
		\item [(a)] Every subobject-essential monomorphism is essential.
		\item [(b)] If $A$ is an object in $\mathcal{C}$ for which every monomorphism with codomain $A$ is normal, then a monomorphism $m:M\to A$ is subobject-essential if and only if it is essential. %(i.e., it satisfies condition $\mathrm{(b)}$ in Proposition {\rm \ref{char-essential}}).
		\item [(c)] In particular, if $\mathcal{C}$ is abelian, then a monomorphism in $\mathcal{C}$ is subobject-essential if and only if it is essential.  
	\end{itemize}
\end{cor}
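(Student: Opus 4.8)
The plan is to read everything off Proposition~\ref{char-essential}, whose crucial content for present purposes is the equivalence of its conditions (a) and (c): a monomorphism $m\colon M\to A$ is essential if and only if, for every \emph{normal} monomorphism $n\colon N\to A$, one has $M\times_A N=0\Rightarrow N=0$. The only real work is to keep track of the difference in the range of the quantifier over $n$ between this condition and the definition of subobject-essentiality, where the same implication is required for \emph{all} monomorphisms $n\colon N\to A$.

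For part (a), I would argue as follows. Suppose $m$ is subobject-essential, so that $M\times_A N=0\Rightarrow N=0$ holds for every monomorphism $n\colon N\to A$. Since every normal monomorphism is in particular a monomorphism, this implication holds a fortiori for every normal monomorphism, which is exactly condition (c) of Proposition~\ref{char-essential}. By the equivalence (a)$\Leftrightarrow$(c) established there, $m$ is essential.

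For part (b), under the hypothesis that every monomorphism with codomain $A$ is normal, the class of monomorphisms into $A$ and the class of normal monomorphisms into $A$ coincide. Hence the defining condition for subobject-essentiality of $m$ (the implication for all monomorphisms into $A$) is literally the same as condition (c) of Proposition~\ref{char-essential} (the implication for all normal monomorphisms into $A$). Combined once more with (a)$\Leftrightarrow$(c), this yields that $m$ is subobject-essential if and only if it is essential. Part (c) is then immediate: in an abelian category every monomorphism is normal, being the kernel of its cokernel, so the hypothesis of (b) is satisfied by every object $A$, and the equivalence holds for every monomorphism of $\mathcal{C}$.

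I do not expect a genuine obstacle here; the statement is a formal consequence of Proposition~\ref{char-essential}. The only point requiring care is that subobject-essentiality quantifies over all monomorphisms with codomain $A$, whereas condition~(c) quantifies only over the normal ones; this asymmetry is precisely what forces the one-directional implication in (a) and the extra hypothesis in (b). Verifying that these two quantifier ranges agree under the respective assumptions is the entire content of the argument.
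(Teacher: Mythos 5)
Your proof is correct and is exactly the argument the paper intends: Corollary~\ref{coro-sub-essent} is stated as an immediate consequence of Proposition~\ref{char-essential}, and your careful tracking of the quantifier range (all monomorphisms versus normal monomorphisms into $A$) via the equivalence (a)$\Leftrightarrow$(c) is precisely the content being left implicit. Nothing is missing.
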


Next, we have:

\begin{prop}\label{subobj-essential}
	The class $\mathrm{Mono}_{SE}(\mathcal{C})$ of subobject-essential monomorphisms in $\mathcal{C}$ 
	\begin{itemize}
		\item [(a)] contains all isomorphisms;
		\item [(b)]  is closed under composition;
		\item [(c)] has the right cancellation property of the form
		\begin{equation*}
		(mm'\in\mathrm{Mono}_{SE}(\mathcal{C})\,\,\&\,\,m\in\mathrm{Mono}(\mathcal{C}))\Rightarrow m\in\mathrm{Mono}_{SE}(\mathcal{C}).
		\end{equation*}		
		\item [(d)] If $\mathcal{C}$ is normal, then the class $\mathrm{Mono}_{SE}(\mathcal{C})$ has the weak right cancellation property 
		\begin{equation*} (mm'\in\mathrm{Mono}_{SE}(\mathcal{C})\,\,\&\,\,m'\in\mathrm{Mono}_{SE}(\mathcal{C}))\Rightarrow m\in\mathrm{Mono}_{SE}(\mathcal{C})
		\end{equation*}
		and, in particular, every split monomorphism that belongs to it is an isomorphism.
		\item [(e)] It has the left cancellation property of the form
		\begin{equation*}
		(mm'\in\mathrm{Mono}_{SE}(\mathcal{C})\,\,\&\,\,m\in \mathrm{Mono}(\mathcal{C}))\Rightarrow m'\in\mathrm{Mono}_{SE}(\mathcal{C}).
		\end{equation*}
		\item [(f)] If $\mathcal{C}$ is normal, then the class $\mathrm{Mono}_{SE}(\mathcal{C})$ is pullback stable. 
	\end{itemize}	 
\end{prop}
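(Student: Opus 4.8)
The plan is to reduce every item to the defining implication ``$M\times_A N=0\Rightarrow N=0$'' by repeatedly using two elementary facts about pullbacks in a pointed finitely complete category: the pasting law, which for $m'\colon M'\to M$ and $m\colon M\to A$ gives $M'\times_A N\cong M'\times_M(M\times_A N)$ for any $n\colon N\to A$; and the observation that pulling the zero subobject $0\to M$ back along a monomorphism $m'\colon M'\to M$ yields $M'\times_M 0\cong\ker(m')=0$, a monomorphism having trivial kernel. Item (a) is immediate, since pulling an isomorphism $m$ back along $n$ makes $M\times_A N\to N$ an isomorphism. For (b) I would paste twice: if $M'\times_A N=0$ then $M'\times_M(M\times_A N)=0$, so $M\times_A N=0$ by subobject-essentiality of $m'$ (note $M\times_A N\to M$ is a monomorphism), whence $N=0$ by that of $m$. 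For (c) the hypothesis $mm'\in\mathrm{Mono}_{SE}(\mathcal C)$ forces $mm'$, hence $m'$, to be a monomorphism; then $M\times_A N=0$ gives $M'\times_A N\cong M'\times_M 0=0$, and subobject-essentiality of $mm'$ yields $N=0$. For (e) one uses that, $m$ being a monomorphism, the pullback over $A$ agrees with the pullback over $M$, i.e. $M'\times_A N'\cong M'\times_M N'$ for $n'\colon N'\to M$; applying $mm'\in\mathrm{Mono}_{SE}(\mathcal C)$ to the monomorphism $mn'\colon N'\to A$ then finishes it. None of (a), (b), (c), (e) needs normality.

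For (d), the essential-condition half is exactly as in (c). What requires normality is showing that $m\colon M\to A$ is itself a monomorphism, for which it suffices to prove $\ker(m)=0$. Writing $k\colon K\to M$ for this kernel and forming $M'\times_M K$, both projections are zero: the projection to $M'$ is killed because $mm'$ has zero kernel, and then the projection to $K$ is killed because $k$ is a monomorphism; since a pullback embeds into the product, $M'\times_M K=0$, and subobject-essentiality of $m'$ gives $K=0$. The parenthetical statement follows from the weak right cancellation just proved: if $m$ is a split monomorphism with retraction $r$, then $rm=1_M\in\mathrm{Mono}_{SE}(\mathcal C)$ together with $m\in\mathrm{Mono}_{SE}(\mathcal C)$ forces $r\in\mathrm{Mono}_{SE}(\mathcal C)$, so $r$ is simultaneously a monomorphism and a split epimorphism, hence an isomorphism, and therefore so is $m$.

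The main obstacle is (f). Fix $g\colon A'\to A$ and form the pullback $m'\colon M'=M\times_A A'\to A'$ of a subobject-essential $m$; monomorphisms being pullback stable, $m'$ is a monomorphism, and I must show it is subobject-essential. Given a monomorphism $n'\colon N'\to A'$ with $M'\times_{A'}N'=0$, pasting identifies $M'\times_{A'}N'$ with $M\times_A N'$, where now $N'$ maps to $A$ through the composite $gn'$. The difficulty is that $gn'$ need not be a monomorphism, so subobject-essentiality of $m$ cannot be applied directly. This is exactly where regularity enters: I would take the (regular epimorphism, monomorphism) factorization $gn'=i e$ with $e\colon N'\twoheadrightarrow I$ and $i\colon I\rightarrowtail A$, and set $J=M\times_A I$, a subobject of $I$. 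Pasting gives $M\times_A N'\cong J\times_I N'$, and pulling the regular epimorphism $e$ back along $J\rightarrowtail I$ produces a regular epimorphism $J\times_I N'\to J$; since its domain $M\times_A N'$ is $0$, its codomain $J$ must be $0$.

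Having obtained $M\times_A I=J=0$ with $i$ a monomorphism, subobject-essentiality of $m$ forces $I=0$, that is, $gn'=0$. Then $n'$ factors through $\ker(g)\rightarrowtail A'$, which in turn factors through $M'=M\times_A A'$ (because $0$ factors through $M$), so $N'\le M'$ as subobjects of $A'$; consequently $N'\cong M'\times_{A'}N'=0$. Equivalently, once $gn'=0$ one sees directly that pulling the monomorphism $m$ back along the zero map $N'\to A$ returns $N'$ itself, which is $0$ by hypothesis. Either way $N'=0$, so $m'$ is subobject-essential and $\mathrm{Mono}_{SE}(\mathcal C)$ is pullback stable. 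I expect the only genuinely delicate point to be this passage through the image factorization in (f) together with the pullback stability of regular epimorphisms, both unavailable without the normality (in fact regularity) hypothesis.
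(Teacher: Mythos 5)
Your proposal is correct and follows essentially the same route as the paper: pasting of pullbacks (together with $\ker(m')=0$ for monomorphisms) handles (a), (b), (c), (e); normality enters in (d) only to show $\mathsf{Ker}(m)=0$ via $M'\times_M\mathsf{Ker}(m)=0$; and (f) is proved exactly as in the paper, by taking the (regular epi, mono) factorization of the composite $gn'$, using pullback stability of regular epimorphisms to get $M\times_A I=0$, applying subobject-essentiality of $m$ to the image, and then concluding $N'=0$ from the factorization through the kernel. Your minor variants (showing both projections vanish in (d), and identifying $N'\cong M'\times_{A'}N'$ at the end of (f) instead of invoking a split epimorphism) are cosmetic differences only.
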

\begin{proof} (a) is obvious.
	
(b) and (c): Given monomorphisms $m\colon M\to A$, $m'\colon M'\to M$ and $n\colon N\to A$, consider the diagram 
	\begin{equation*}\xymatrix{M'\times_AN\ar[d]\ar[r]^{m'\times1}&M\times_AN\ar[d]\ar[r]&N\ar[d]^n\\M'\ar[r]_{m'}&M\ar[r]_m&A,}
	\end{equation*}
where the unlabeled arrows are the suitable pullback projections. Since both squares in this diagram are pullbacks, we can argue as follows: 
\begin{itemize}
	\item If $m, m'\in \mathcal{M}$, then $M'\times_AN=0\Rightarrow M\times_AN=0\Rightarrow N=0$.
	\item If $mm'\in \mathcal{M}$, then $M\times_AN=0\Rightarrow M'\times_AN=0\Rightarrow N=0$, where the first implication holds because $\mathsf{Ker}(m')=0$.
\end{itemize}

(d): Suppose $mm'$ and $m$ are in $\mathrm{Mono}_{SE}(\mathcal{C})$. Thanks to (c), we only need to prove that $m$ is a monomorphism. Therefore, since $\mathcal{C}$ is normal, it suffices to prove that $m$ has zero kernel. For, consider the diagram
\begin{equation*}\xymatrix{0\ar[d]\ar[r]&\mathsf{Ker}(m)\ar[d]^{\ker(m)}\ar[r]&0\ar[d]\\M'\ar[r]_{m'}&M\ar[r]_m&A}
\end{equation*}
and observe that:
\begin{itemize}
	\item Its left-hand square is a pullback because so is its right-hand square, and $mm'$ is a monomorphism because it is in $\mathrm{Mono}_{SE}(\mathcal{C})$.
	\item Since $m'$ is in $\mathrm{Mono}_{SE}(\mathcal{C})$, we have that $\mathsf{Ker}(m)=0$. 
\end{itemize}

(e): Suppose $mm'$ is in $\mathrm{Mono}_{SE}(\mathcal{C})$ and $m$ is a monomorphism. First notice that, since $mm'$ is a monomorphism, so is $m'$. After that, consider the diagram 
\begin{equation*}\xymatrix{M'\times_ML\ar[d]\ar[r]&L\ar[d]_l\ar[r]^{1_L}&L\ar[d]^{ml}\\M'\ar[r]_{m'}&M\ar[r]_m&A,}
\end{equation*}
where the unlabeled arrows are the suitable pullback projections. Since both squares in this diagram are pullbacks, we have
\begin{equation*}
M'\times_ML=0\Rightarrow M'\times_AL=0\Rightarrow L=0.
\end{equation*}

(f): Given $m\colon M\to A$ from $\mathrm{Mono}_{SE}(\mathcal{C})$, a morphism $x\colon X\to A$, and a monomorphism $u\colon U\to X$, consider the diagram
	\begin{equation*}\xymatrix{M\times_AU\ar[ddd]_{1\times e}\ar[rd]_{1\times u}\ar[rr]&&U\ar[ddd]_e\ar[rd]_u\\&M\times_AX\ar[ddd]\ar[rr]&&X\ar[ddd]_x\\\\M\times_AN\ar[rr]\ar[rd]&&N\ar[rd]_n\\&M\ar[rr]_m&&A,}
	\end{equation*}
in which $M\times_AU=(M\times_AX)\times_XU,$ $xu=ne$ is a (regular epi, mono) factorization of $xu$, and the unlabeled arrows are the suitable pullback projections. Assuming $M\times_AU=0$, we have to prove that $U=0$. Indeed:
\begin{itemize}
	\item Since $e$ is a regular epimorphism, so is $1\times e$.
	\item Since $1\times e$ is an epimorphism and $M\times_AU=0$, we have $M\times_AN=0$.
	\item Since $M\times_AN=0$ and $m$ is in $\mathrm{Mono}_{SE}(\mathcal{C})$, we have that $N=0$.
	\item Since $N=0$, we have $xe=ne=0$, and so $u$ factors through the kernel of $x$.
	\item Since $u$ factors through the kernel of $x$, it also factors through the pullback projection $M\times_AX\to X$.
	\item Since $u$ factors through the pullback projection $M\times_AX\to X$, and the top part of our diagram is a pullback, the pullback projection $M\times_AU\to U$ is a split epimorphism.  
\end{itemize}
It follows that $U=0$, as desired.  
\end{proof}

\begin{remark} {\em For a composable pair $(m,m')$ of monomorphisms, $m'$ can be seen as a pullback of $mm'$ along $m$ (this well-known fact was used in the proof of 2.1(f) for the pair $(m,l)$). This implies that every pullback stable class of monomorphisms has the strong left cancellation property and, in particular, that, in the case of normal $\mathcal{C}$, \ref{subobj-essential}(e) could be deduced from \ref{subobj-essential}(f).}
\end{remark}

\begin{remark}\label{counter-weak-left} {\em In contrast to Proposition \ref{ST-Mono}(f) and \ref{subobj-essential}(e), the class $\mathrm{Mono}_E(\mathcal{C})$ does not even have, in general, the weak left cancellation property $m,\,mm'\in\mathrm{Mono}_E(\mathcal{C})\Rightarrow m'\in\mathrm{Mono}_E(\mathcal{C})$. One can easily construct counter-examples in many non-abelian semi-abelian algebraic categories by suitably choosing $m':M'\to M$ to be a split monomorphism (that is not an isomorphism) and choosing $m:M\to A$ with simple $A$. For example:
	\begin{itemize}
		\item [(a)] Let $\mathcal{C}$ be the category of groups, $A$ any simple group that has an element $a$ of order $pq$ with relatively prime $p$ and $q$, $M$ the subgroup of $A$ generated by $a$, $M'$ the subgroup of $A$ generated by $a^p$, and $m:M\to A$ and $m':M'\to M$ the inclusion maps. Then $m$ and $mm'$ are in $\mathcal{M}_E$, but $m'$ is not. 
		\item [(b)] Let $\mathcal{C}$ be the category of rings (commutative or not; we do not require them to have identity element, to make $\mathcal{C}$ semi-abelian), $M'=K$ be a field, $M=K[x]$ the polynomial ring in one variable $x$ over $K$, $A=K(x)$ the field of fractions of $M$, and $m:M\to A$ and $m':M'\to M$ the canonical monomorphisms. Then, again, $m$ and $mm'$ are in $\mathcal{M}_E$, but $m'$ is not.		
	\end{itemize}}
\end{remark}

\begin{remark}\label{Mono-Grp} {\em Although the non-pullback-stability of $\mathrm{Mono}_E(\mathcal{C})$ in the category of groups follows from \ref{counter-weak-left}(a), let us give what seems to be the simplest counter-example. Consider the pullback
\begin{equation*}\xymatrix{0\ar[d]\ar[r]&S_2\ar[d]\\A_3\ar[r]&S_3}
\end{equation*}
of monomorphisms, where $S_2$, $S_3$, $A_3$ are the symmetric/alternating groups. Its bottom arrow is an essential monomorphism, while the top one is not. This also shows that $A_3\to S_3$ is an example of an essential monomorphism that is not subobject-essential.}
\end{remark}
\begin{teo}\label{SE=St}
	If $\mathcal{C}$ is normal, then $\mathrm{Mono}_{SE}(\mathcal{C})=\mathrm{St}(\mathrm{Mono}_E(\mathcal{C}))$, that is, a morphism in $\mathcal{C}$ is a subobject-essential monomorphism if and only if it is a pullback stable essential monomorphism.
\end{teo}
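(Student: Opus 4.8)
The plan is to establish the asserted equality by proving the two inclusions $\mathrm{Mono}_{SE}(\mathcal{C})\subseteq\mathrm{St}(\mathrm{Mono}_E(\mathcal{C}))$ and $\mathrm{St}(\mathrm{Mono}_E(\mathcal{C}))\subseteq\mathrm{Mono}_{SE}(\mathcal{C})$ separately, using the results already proved in this section together with the elementary remark that $\mathrm{St}(\mathcal{M})\subseteq\mathcal{M}$ for any class $\mathcal{M}$ (seen by pulling back along an identity morphism, which realizes $m$ itself as one of its own pullback projections).

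For the inclusion $\subseteq$, I would start from a subobject-essential monomorphism $m\colon M\to A$ and an arbitrary pullback of it along some $x\colon X\to A$, with projection $u\colon M\times_A X\to X$. Proposition \ref{subobj-essential}(f) guarantees that $\mathrm{Mono}_{SE}(\mathcal{C})$ is pullback stable, so $u$ is again subobject-essential, and Corollary \ref{coro-sub-essent}(a) then upgrades this to $u$ being essential. Since every pullback projection of $m$ is thus essential, $m$ lies in $\mathrm{St}(\mathrm{Mono}_E(\mathcal{C}))$ by definition of the stabilization. This direction is therefore short once Proposition \ref{subobj-essential}(f) is available.

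For the reverse inclusion I would take $m\in\mathrm{St}(\mathrm{Mono}_E(\mathcal{C}))$ --- automatically a monomorphism, since $\mathrm{St}(\mathrm{Mono}_E(\mathcal{C}))\subseteq\mathrm{Mono}_E(\mathcal{C})$ --- and verify the subobject-essential condition directly. Given an arbitrary monomorphism $n\colon N\to A$ with $M\times_A N=0$, I would pull $m$ back along $n$: the projection $p\colon M\times_A N\to N$ is then a pullback projection of $m$, hence an essential monomorphism. But $M\times_A N=0$, so $p$ is the canonical morphism $0\to N$, which is a split monomorphism (its retraction being the unique morphism $N\to 0$). Proposition \ref{Prop-Mono-ECS}(d) says that a split monomorphism lying in $\mathrm{Mono}_E(\mathcal{C})$ must be an isomorphism, forcing $N=0$. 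Hence $m$ is subobject-essential.

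The point to get right --- rather than a genuine obstacle --- is the interplay between ``all monomorphisms'' (in the definition of subobject-essential) and ``normal monomorphisms'' (through which essentiality is characterized in Proposition \ref{char-essential}(c)): the two directions bridge this gap in opposite ways. The $\subseteq$ direction relies entirely on the pullback stability of $\mathrm{Mono}_{SE}(\mathcal{C})$, whose proof (Proposition \ref{subobj-essential}(f)) is where the real work of the section already went. The $\supseteq$ direction sidesteps the issue by pulling back along the arbitrary $n$ and exploiting that an essential monomorphism out of $0$ is split, so the only delicate checks are confirming that $0\to N$ is indeed split and that Proposition \ref{Prop-Mono-ECS}(d) applies.
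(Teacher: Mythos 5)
Your proof is correct and follows essentially the same route as the paper's: the inclusion $\mathrm{Mono}_{SE}(\mathcal{C})\subseteq\mathrm{St}(\mathrm{Mono}_E(\mathcal{C}))$ via Proposition \ref{subobj-essential}(f) together with Corollary \ref{coro-sub-essent}(a), and the reverse inclusion by pulling back along $n$ to get an essential split monomorphism $0\to N$, which Proposition \ref{Prop-Mono-ECS}(d) forces to be an isomorphism. Your additional checks (that $\mathrm{St}(\mathcal{M})\subseteq\mathcal{M}$ and that $0\to N$ is split) are details the paper leaves implicit, but they are exactly right.
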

\begin{proof}
	The inclusion $\mathrm{Mono}_{SE}(\mathcal{C})\subseteq\mathrm{St}(\mathrm{Mono}_E(\mathcal{C}))$ follows from \ref{coro-sub-essent}(a) and \ref{subobj-essential}(f). Conversely, let $m:M\to A$ be a pullback stable essential monomorphism in $\mathcal{C}$ and $n:N\to A$ a monomorphism in $\mathcal{C}$ with $M\times_AN=0$. Then $0\to N$ is an essential monomorphism because it is a pullback of $m$. Hence,  from the last assertion of \ref{Prop-Mono-ECS}(d), $0\to N$ is an isomorphism, that is, $N=0.$  
\end{proof}

Since every abelian category is normal, we easily get:

\begin{cor} If $\mathcal{C}$ is abelian then $\Spec(\mathcal{C})$ is the same as the spectral category of $\mathcal{C}$ in the usual sense (see \cite{[GO]} and \cite{[GZ]}). \end{cor}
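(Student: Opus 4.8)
The plan is to reduce the corollary to an equality of two classes of monomorphisms, after which it becomes a purely formal statement about categories of fractions. By definition, $\Spec(\mathcal{C}) = \mathcal{C}[(\mathrm{St}(\mathrm{Mono}_E(\mathcal{C})))^{-1}]$ is the category of fractions of $\mathcal{C}$ for the class of pullback stable essential monomorphisms, whereas the spectral category of an abelian $\mathcal{C}$ in the classical sense of \cite{[GO]} and \cite{[GZ]} is, as recalled in the Introduction, the category of fractions $\mathcal{C}[\mathrm{Mono}_E(\mathcal{C})^{-1}]$ obtained by inverting \emph{all} essential monomorphisms. Since $\mathcal{C}[\mathcal{M}^{-1}]$ together with its canonical functor $P_{\mathcal{M}}$ depends only on $\mathcal{C}$ and on $\mathcal{M}$, it suffices to show that the two classes being inverted coincide, i.e. that $\mathrm{St}(\mathrm{Mono}_E(\mathcal{C})) = \mathrm{Mono}_E(\mathcal{C})$; equivalently, that in an abelian category every essential monomorphism is automatically pullback stable.

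To establish this equality I would simply chain together two results already proved. Every abelian category is normal, so Theorem \ref{SE=St} applies and gives $\mathrm{St}(\mathrm{Mono}_E(\mathcal{C})) = \mathrm{Mono}_{SE}(\mathcal{C})$. On the other hand, Corollary \ref{coro-sub-essent}(c) states that in an abelian category a monomorphism is subobject-essential precisely when it is essential, that is, $\mathrm{Mono}_{SE}(\mathcal{C}) = \mathrm{Mono}_E(\mathcal{C})$. Combining the two yields $\mathrm{Mono}_E(\mathcal{C}) = \mathrm{Mono}_{SE}(\mathcal{C}) = \mathrm{St}(\mathrm{Mono}_E(\mathcal{C}))$, the required coincidence of classes. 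Hence $\Spec(\mathcal{C}) = \mathcal{C}[\mathrm{Mono}_E(\mathcal{C})^{-1}]$ as categories under $\mathcal{C}$, which is the classical spectral category.

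As a sanity check I would confirm that the two descriptions of the hom-sets match. Because $\mathrm{St}(\mathrm{Mono}_E(\mathcal{C}))$ admits the calculus of right fractions by Proposition \ref{focal}, and now equals $\mathrm{Mono}_E(\mathcal{C})$, Remark \ref{GabrielZ} presents $\hom_{\Spec(\mathcal{C})}(A,B)$ as the filtered colimit $\colim\,\hom(M,B)$ taken over the essential subobjects $m\colon M\to A$, which recovers the Gabriel--Oberst colimit formula recalled in the Introduction. It is also reassuring that the present argument explains, a posteriori, the classical fact that essential monomorphisms in an abelian category admit the calculus of right fractions: here it becomes a consequence of their pullback stability together with Proposition \ref{focal}.

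The only point requiring care is the very first identification, namely that the spectral category of \cite{[GO]} and \cite{[GZ]} is literally the localization $\mathcal{C}[\mathrm{Mono}_E(\mathcal{C})^{-1}]$ rather than some a priori different object; but this is exactly what those references establish and what the Introduction recalls, so no additional work is needed. Everything beyond it is formal, the genuine mathematical content being carried entirely by Theorem \ref{SE=St} and Corollary \ref{coro-sub-essent}(c).
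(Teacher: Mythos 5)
Your proposal is correct and takes essentially the same route as the paper: both arguments combine Theorem \ref{SE=St} (applicable since abelian categories are normal) with Corollary \ref{coro-sub-essent}(c) to conclude that $\mathrm{St}(\mathrm{Mono}_E(\mathcal{C}))=\mathrm{Mono}_{SE}(\mathcal{C})=\mathrm{Mono}_E(\mathcal{C})$, and then identify the resulting localization at all essential monomorphisms with the classical spectral category via the description in \cite{[GZ]}.
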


\begin{proof} Having in mind Corollary~\ref{coro-sub-essent}(c), this follows from Theorem 6.9 and the description of the spectral category of an abelian category given in 2.5(e) of \cite[Chapter~I]{[GZ]}.\end{proof}

\section{Uniform objects}

Let us return to the general situation of Remark 4.2(a), where $\mathcal{M}$ is an arbitrary class of morphisms in $\mathcal{C}$, but let us assume that $\mathcal{C}$ is pointed and that

\begin{equation*}\xymatrix{x\in \mathcal{M}\Rightarrow \mathsf{Ker}(x)=0.}
\end{equation*}
As already observed, {in any \emph{normal} category, this property simply says that $\mathcal{M}$ is a class of monomorphisms. }
\begin{defi}{\rm
	An object $A$ of $\mathcal{C}$ is said to be {\em $\mathcal{M}$-uniform} if a morphism $x \colon X\rightarrow  A$ belongs to $\mathcal{M}$ whenever $X\not=0$ and $\mathsf{Ker}(x)=0$.}
\end{defi}
The term \emph{uniform} comes from module theory, where a non-zero module $M$ is called a uniform module if every non-zero submodule $N$ of $M$ is an essential submodule. Note that this term was also used in \cite{[AF]} for an analogue notion in the category of $G$-groups. 
\begin{prop}\label{uniform}
	Let $A$ and $B$ be $\mathcal{M}$-uniform objects in $\mathcal{C}$. Every non-zero morphism $A\to B$ in $\mathcal{C}[\mathcal{M}^{-1}]$ of the form  $P_{\mathcal{M}}(f)P_{\mathcal{M}}(x)^{-1}$ with $x$ in $\mathcal{M}$ is an isomorphism. 
\end{prop}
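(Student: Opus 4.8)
The plan is to analyze the given span $A\xleftarrow{\,x\,}X\xrightarrow{\,f\,}B$ (with $x\in\mathcal{M}$, so that $P_{\mathcal{M}}(x)$ is invertible) through the kernel $k=\ker(f)\colon K\to X$ of $f$, and to use the two uniformity hypotheses in complementary roles: uniformity of $A$ to force $\phi=P_{\mathcal{M}}(f)P_{\mathcal{M}}(x)^{-1}$ to vanish whenever $K\neq0$, and uniformity of $B$ to make $f$ itself belong to $\mathcal{M}$ in the remaining case $K=0$.

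First I would record two elementary facts about the morphism $xk\colon K\to A$. Since $k$ is a monomorphism (being a kernel) and $\mathsf{Ker}(x)=0$, a short diagram chase shows $\mathsf{Ker}(xk)=0$: the canonical map $\mathsf{Ker}(xk)\to K$ composes with $k$ into $\mathsf{Ker}(x)=0$, hence is zero because $k$ is monic, so its monic domain is $0$. I would also note that, because $\mathcal{C}$ is pointed, $fk=0$ and the zero morphism satisfies $0_{K,B}=0_{A,B}\circ(xk)$.

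Then comes the main dichotomy. Suppose $K=\mathsf{Ker}(f)\neq0$. By the two facts above together with $\mathcal{M}$-uniformity of $A$, the morphism $xk$ lies in $\mathcal{M}$, so $P_{\mathcal{M}}(xk)=P_{\mathcal{M}}(x)P_{\mathcal{M}}(k)$ is invertible in $\mathcal{C}[\mathcal{M}^{-1}]$. Forming the composite $\phi\circ P_{\mathcal{M}}(xk)$ and cancelling $P_{\mathcal{M}}(x)^{-1}P_{\mathcal{M}}(x)$ gives $\phi\circ P_{\mathcal{M}}(xk)=P_{\mathcal{M}}(fk)=P_{\mathcal{M}}(0_{A,B})\circ P_{\mathcal{M}}(xk)$; right-cancelling the isomorphism $P_{\mathcal{M}}(xk)$ yields $\phi=P_{\mathcal{M}}(0_{A,B})=0$, contradicting $\phi\neq0$. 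Hence $K=0$. A parallel and easier computation disposes of the degenerate case $X=0$: then $f=0_{A,B}\circ x$, and the same cancellation gives $\phi=0$. So $X\neq0$ and $\mathsf{Ker}(f)=0$, and now $\mathcal{M}$-uniformity of $B$ forces $f\in\mathcal{M}$; consequently $\phi=P_{\mathcal{M}}(f)P_{\mathcal{M}}(x)^{-1}$ is a composite of isomorphisms, hence an isomorphism.

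The genuinely delicate points are bookkeeping rather than conceptual: verifying $\mathsf{Ker}(xk)=0$ so that uniformity of $A$ is applicable, and keeping track of the ``zero morphism'' $P_{\mathcal{M}}(0_{A,B})$ in $\mathcal{C}[\mathcal{M}^{-1}]$, which need not be visibly pointed. Here the factorization $0_{K,B}=0_{A,B}\circ(xk)$ and the invertibility of $P_{\mathcal{M}}(xk)$ are precisely what allow the cancellation argument to identify $\phi$ with the image of the zero morphism. I expect this last cancellation, together with the separate treatment of $X=0$, to be the main thing to get exactly right.
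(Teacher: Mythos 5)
Your proof is correct and follows essentially the same route as the paper's: the dichotomy on $\mathsf{Ker}(f)$, applying $\mathcal{M}$-uniformity of $A$ to the composite $x\ker(f)$ (whose kernel you check is zero) to show the morphism equals $P_{\mathcal{M}}(0)$ when $\mathsf{Ker}(f)\neq 0$, and $\mathcal{M}$-uniformity of $B$ to put $f$ in $\mathcal{M}$ when $\mathsf{Ker}(f)=0$. The only difference is that you explicitly dispose of the degenerate case $X=0$, which is indeed needed since uniformity of $B$ only applies to morphisms with non-zero domain --- a point the paper's proof passes over in silence.
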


\begin{proof}
	Consider the diagram
	\begin{equation*}\xymatrix{&\mathsf{Ker}(f)\ar[d]_{\ker(f)}\\A&X\ar[l]_x\ar[r]^f&B,}
	\end{equation*}
	where $X$ is the domain of $x$ (the domain of $f$). 
	
	Suppose $\mathsf{Ker}(f)\not=0$. Since $A$ is $\mathcal{M}$-uniform and $x$ and $\ker(f)$ have zero kernels, the composite $\mathsf{Ker}(f)\to X\to A$ belongs to $\mathcal{M}$. As $x$ also belongs to $\mathcal{M}$, this implies that $P_{\mathcal{M}}(\ker(f))$ is an isomorphism, and we can write
	\begin{eqnarray*}P_{\mathcal{M}}(f)P_{\mathcal{M}}(x)^{-1} &=& P_{\mathcal{M}}(f)P_{\mathcal{M}}(\ker(f))P_{\mathcal{M}}(\ker(f))^{-1}P_{\mathcal{M}}(x)^{-1}\\
	& = &P_{\mathcal{M}}(f(\ker(f))P_{\mathcal{M}}(\ker(f))^{-1}P_{\mathcal{M}}(x)^{-1}\\ & = &P_{\mathcal{M}}(0)P_{\mathcal{M}}(\ker(f))^{-1}P_{\mathcal{M}}(x)^{-1}\\ & =& 0.	
	\end{eqnarray*}
	That is, we can suppose $\mathsf{Ker}(f)=0$. If so, then, since $B$ is $\mathcal{M}$-uniform, $f$ belongs to $\mathcal{M}$, which makes $P_{\mathcal{M}}(f)P_{\mathcal{M}}(x)^{-1}$ an isomorphism.   
\end{proof}

In order to state the next result, let us recall that a \emph{division monoid} is a non-trivial monoid $M$ with the property that the submonoid $U(M)$ of invertible elements is given by $U(M) = M \setminus \{ 0 \}$. We write 
${\End}_{\mathrm{Spec}(\mathcal{C})} (A)$ for the monoid of endomorphisms of an object $A$ in the spectral category ${\mathrm{Spec}(\mathcal{C})}$, where $\mathcal{C}$ is a normal category.% and ${\mathcal M}$ the class of subobject-essential monomorphisms in $\mathcal C$.
\begin{cor}
Let $\mathcal C$ be a normal category, and $A$ an $\mathcal M$-uniform object in $\mathcal C$ for $\mathcal M$ being the class of subobject-essential monomorphisms in $\mathcal{C}$. Then the monoid ${\End}_{\mathrm{Spec}(\mathcal{C})} (A)$ of endomorphisms of $A$ in ${\mathrm{Spec}(\mathcal{C})}$ is a division monoid.
 \end{cor}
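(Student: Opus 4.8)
The plan is to establish, for the monoid $M := \operatorname{End}_{\mathrm{Spec}(\mathcal{C})}(A)$, the two defining features of a division monoid: that $M$ is non-trivial and that $U(M)=M\setminus\{0\}$. Throughout I would write $\mathcal{M}=\mathrm{Mono}_{SE}(\mathcal{C})$, which by Theorem \ref{SE=St} coincides with $\mathrm{St}(\mathrm{Mono}_E(\mathcal{C}))$, the class inverted in forming $\mathrm{Spec}(\mathcal{C})=\mathcal{C}[\mathcal{M}^{-1}]$, so that $A$ being uniform for this $\mathcal{M}$ is exactly the hypothesis of Proposition \ref{uniform}.

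First I would collect the structural facts that place us in the setting of Section 7. By Proposition \ref{subobj-essential} the class $\mathcal{M}$ is pullback stable, contains all isomorphisms, is closed under composition, and lies in $\mathrm{Mono}(\mathcal{C})$; hence Proposition \ref{focal} applies, $\mathcal{M}$ admits the calculus of right fractions, and every morphism of $\mathrm{Spec}(\mathcal{C})$ — in particular every element of $M$ — can be written as $P_{\mathcal{M}}(f)P_{\mathcal{M}}(x)^{-1}$ with $x\in\mathcal{M}$. I would also verify that $\mathrm{Spec}(\mathcal{C})$ is pointed, so that $M$ has a zero element: since the canonical functor preserves finite limits, $P_{\mathcal{M}}(0)$ (the image of the zero object) is terminal, and the colimit formula of Remark \ref{GabrielZ} gives $\hom_{\mathrm{Spec}(\mathcal{C})}(P_{\mathcal{M}}(0),B)=\hom_{\mathcal{C}}(0,B)$, because the only subobject of $0$ is $0$ itself and $1_0\in\mathcal{M}$; thus $P_{\mathcal{M}}(0)$ is also initial, hence a zero object. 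Consequently $M$ carries an absorbing zero element, namely the image $0$ of the zero morphism $A\to A$.

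The invertibility of the non-zero elements is then immediate from Proposition \ref{uniform} applied with $B=A$: every element of $M$ has the form $P_{\mathcal{M}}(f)P_{\mathcal{M}}(x)^{-1}$ with $x\in\mathcal{M}$, and such a morphism is either $0$ or an isomorphism, so $M\setminus\{0\}\subseteq U(M)$. Conversely $0\notin U(M)$, since an inverse $b$ of the absorbing element would give $1=0\cdot b=0$, contradicting non-triviality. Hence $U(M)=M\setminus\{0\}$, provided that $1\neq 0$.

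The step I expect to require the most care is precisely this non-triviality $1\neq 0$. Here I would first use that $A$ is uniform to conclude $A\neq 0$ (a uniform object, like a uniform module, being non-zero by convention), so that $1_A$ and the zero morphism $A\to A$ are already distinct in $\mathcal{C}$. Because $\mathcal{M}\subseteq\mathrm{Mono}(\mathcal{C})$, the functor $P_{\mathcal{M}}$ is faithful: if $P_{\mathcal{M}}(f)=P_{\mathcal{M}}(g)$ then, by Remark \ref{GabrielZ}, $fm=gm$ for some $m\in\mathcal{M}$, and cancelling the monomorphism $m$ yields $f=g$. Therefore the images of $1_A$ and of the zero morphism remain distinct, i.e. $1\neq 0$ in $M$. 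Equivalently, $P_{\mathcal{M}}(1_A)=P_{\mathcal{M}}(0)$ would force some $m\in\mathcal{M}$ with $m=0$, hence $0\to A$ to be subobject-essential, which fails for $A\neq 0$. Assembling the three parts shows that $M$ is a non-trivial monoid with $U(M)=M\setminus\{0\}$, that is, a division monoid.
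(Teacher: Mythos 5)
Your overall route is the same as the paper's: by Theorem \ref{SE=St} the class $\mathcal{M}=\mathrm{Mono}_{SE}(\mathcal{C})$ coincides with the class $\mathrm{St}(\mathrm{Mono}_E(\mathcal{C}))$ inverted to form $\mathrm{Spec}(\mathcal{C})$, so Proposition \ref{uniform} (with $B=A$) shows that every non-zero endomorphism of $A$ in $\mathrm{Spec}(\mathcal{C})$ is invertible. The paper's proof consists of exactly these two citations; the supplementary points you verify --- that $\mathrm{Spec}(\mathcal{C})$ is pointed, that every endomorphism of $A$ there has the form $P_{\mathcal{M}}(f)P_{\mathcal{M}}(x)^{-1}$, that $0$ is not invertible, and that $1\neq 0$ --- are details left implicit in the paper, and it is worthwhile to spell them out.

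One claim in your non-triviality step is, however, false: $P_{\mathcal{M}}$ is \emph{not} faithful, and your justification cancels a monomorphism on the wrong side. From $P_{\mathcal{M}}(f)=P_{\mathcal{M}}(g)$ the calculus of right fractions gives some $m\in\mathcal{M}$ with $fm=gm$; here $m$ is composed on the right, whereas monomorphisms cancel only on the left ($mf=mg\Rightarrow f=g$). The failure is genuine, not merely a gap in the justification: already for $\mathcal{C}$ abelian, $P_{\mathcal{M}}$ annihilates every morphism whose kernel is essential --- for instance the non-zero quotient map $\mathbb{Z}\to\mathbb{Z}/2\mathbb{Z}$ dies in the spectral category, since its kernel $2\mathbb{Z}$ is essential in $\mathbb{Z}$. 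Fortunately, the sentence you present as an equivalent restatement is in fact a different --- and correct --- argument, and it is all you need: $P_{\mathcal{M}}(1_A)=P_{\mathcal{M}}(0)$ would give $m=1_A\,m=0\,m=0$ for some $m\colon A'\to A$ in $\mathcal{M}$; a zero morphism that is monic has $A'=0$, so $0\to A$ would be subobject-essential, and taking $n=1_A$ in the definition of subobject-essential then forces $A=0$, contradicting the (standard, and also the paper's) convention that uniform objects are non-zero. With that argument replacing the faithfulness claim, your proof is complete and agrees in substance with the paper's.
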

 \begin{proof}
 This immediately follows from Proposition \ref{uniform}, by taking into account the fact that the class of subobject-essential monomorphisms coincides with the class of pullback-stable essential monomorphisms whenever $\mathcal C$ is a normal category (by Theorem \ref{SE=St}).
 \end{proof}
 This last result extends Lemma $5.4$ in \cite{[AF]}, where the base category $\mathcal C$ was the category of $G$-groups, to the general context of a normal category $\mathcal C$.


\begin{thebibliography}{99}

\bibitem{[AHS]} J. Ad\'amek, H. Herrlich, and G. E. Strecker, ``Abstract and concrete categories. The joy of cats'', Pure and Applied Math., John Wiley \& Sons, Inc., New York, 1990.

{ \bibitem{AHRT} J. Ad\'amek, H. Herrlich, J. Rosick\'{y} and W. Tholen, Injective hulls are not natural, Algebra Universalis 48, 2002, 374-388.}

\bibitem{[AF]} M. J. Arroyo Paniagua and A. Facchini, Category of $G$-groups and its spectral category, Comm. Algebra 45, 4, 2017, 1696--1710.

{ \bibitem{Ban} B. Banaschewski, Injectivity and essential extensions in equational classes of algebras, in ``Queen's papers in Pure and Applied Mathematics No. 25'' (Proceedings of the Conference on Universal Algebra, Kingston 1969) pp. 131-147. Kingston, Ontario, 1970.}

\bibitem{[B1]} J. B\'enabou, Introduction to bicategories, in ``1967 Reports of the Midwest Category Seminar'', Springer, Berlin, pp. 1--77.

\bibitem{[B2]} J. B\'enabou, Some geometric aspects of the calculus of fractions, The European Colloquium of Category Theory (Tours, 1994), Appl. Categ. Structures 4, 2, 1996, 139--165.

\bibitem{[CJKP]} A. Carboni, G. Janelidze, G. M. Kelly, and R. Par\'e, On localization and stabilization of factorization systems, Appl. Categ. Structures 5, 1997, 1--58.

\bibitem{Chennai}  A. Facchini, Injective modules, spectral categories, and applications. In ``Noncommutative rings, group rings, diagram algebras and their applications'', S. K. Jain and S. Parvathi Eds., Contemp. Math. 456, Amer. Math. Soc., Providence, RI, 2008, pp. 1--17.
 
 \bibitem{Fichtner} K. Fichtner, Eine Bermerkung \"uber Mannigfaltigkeiten universeller Algebren mit Idealen, Monatsh. d. Deutsch. Akad. d. Wiss. (Berlin) 12, 1970, 21--25. 
 
\bibitem{[Ga]} P. Gabriel, Des cat\'egories ab\'eliennes, Bull. Soc. Math. France, 90, 1962, 323--448.

\bibitem{[GO]} P. Gabriel and U. Oberst, Spektralkategorien und regul\"are Ringe im von-Neumannschen Sinn., Math. Z. 92, 1966, 389--395.

\bibitem{[GZ]} P. Gabriel and M. Zisman, ``Calculus of fractions and homotopy theory'', Springer, Berlin, 1967.

\bibitem{goodearl}
K.~R. Goodearl, ``Ring Theory. Nonsingular Rings and Modules'', Marcel Dekker, Inc., New York and Basel, 1976.

%\bibitem{[Gr]} A. Grothendieck, Sur quelques points d'alg\`ebre homologique, T\^ohoku Math. J. (2) 9, 1957, 119--221.!!!

%\bibitem{[H]} P. J. Higgins, Groups with multiple operators, Proc. London Math. Soc. (3) 6, 1956, 366--416.!!! 

\bibitem{[JM]} G. Janelidze and S. Mac Lane, Private communication.

\bibitem{JMT} G. Janelidze, L. M\'arki and W. Tholen, Semi-abelian categories, J. Pure Appl. Algebra 168, 2002, 367--386.

\bibitem{JMU} G. Janelidze, L. M\'arki and A. Ursini, Ideals and clots in universal algebra and in semi-abelian categories, J. Algebra 307, 2007, 191-208.

\bibitem{[JT]} G. Janelidze and W. Tholen, Strongly separable morphisms in general categories, Theory Appl. Categ. 23, 7, 2010, 136--149.

\bibitem{[Ja]} Z. Janelidze, The pointed subobject functor, $3\times3$ lemmas, and subtractivity of spans, Theory Appl. Categ. 23, 11, 2010, 221--242.

\bibitem{[Mi]} B. Mitchell, ``Theory of categories'', Pure and Applied Math. 17, Academic Press, 1965.

%\bibitem{[P]} B. Pareigis, ``Categories and functors'', Translated from the German, Pure and Applied Math. 39, Academic Press, New %York-London 1970.!!!

%\bibitem{[R]} Yu. Ryabukhin (Ju. M. Rjabuhin), Radicals in $\Omega$-groups. II. Ideal-hereditary radicals, Mat. Issled. 3, 1968, 3:4 (10), %1969, 108--135.!!! 

\bibitem{[S]} H. Schubert, ``Categories'', Springer-Verlag, New York-Heidelberg, 1972.

%\bibitem{[U]} Y. Utumi, On quotient rings, Osaka Math. J. 8, 1956, 1--18.!!!

{ \bibitem{Tholen} W. Tholen, Injective objects and cogenerating sets, J. Algebra 73, 1981, 139--155. }

\bibitem{[Y]} N. Yoneda, On Ext and exact sequences, J. Fac. Sci. Univ. Tokyo Sect. I, 8, 1960, 507--576.

\end{thebibliography}
\end{document}